\documentclass{article}
\usepackage{amsmath,amsthm,amssymb,graphics,wrapfig}
\newcommand{\R}{\mathbb{R}\mkern1mu}

\newcommand{\N}{\mathbb{N}\mkern1mu}
\newtheorem{theorem}{Theorem}
\newtheorem{lemma}{Lemma}
\newtheorem{cor}{Corollary}

\theoremstyle{remark}

\title{Geometrically $L^p$-optimal lines of vertices of an equilateral triangle}
\author{Annett P\"uttmann}
\date{\today}
\begin{document}
\maketitle 
\begin{abstract}
We consider the distances between a line and a set of points in the plane defined by
the $L^p$-norms of the vector consisting of the euclidian distance between the single
points and the line.
We determine lines with minimal geometric $L^p$-distance to the vertices of an equilateral triangle
for all $1\leq p\leq \infty$.
The investigation of the $L^p$-distances for $p\ne 1,2,\infty$ establishes the passage between the
well-known sets of optimal lines for $p=1,2,\infty$.

The set of optimal lines consists of three lines each parallel to one of the triangle sides 
for $1\leq p<4/3$ and $2<p\leq \infty$ and 
of the three perpendicular bisectors of the sides for $4/3<p<2$. 
For $p=2$ and $p=4/3$ there exist one-dimensional families of  optimal lines.
\end{abstract}
%\tableofcontents
%\newpage
\section{Introduction}
In order to investigate the problem of finding lines in the plane which are as close as possible to a given
finite set of points $P:=\{Êp_1, \ldots, p_m\} \subset \R^2$ it is necessary to define the distance between
a line  $g\subset\R^2$ and the set $P$.
A suitable notion of distance depends on the specific problem which motivated the interest in lines
close to given points. 
It is often useful to define such a distance in two steps:
First the distance between a single point $p_j$ and a line $g$ is defined.
We call these distances  $d_j$. 
Then the distances $d_j$ are combined to a notion of distance between the line $g$ and the set $P$
which we denote by $d(g,P)$.

It is appropriate to work with the algebraic (vertical) distance between a point and a line
for the interpolation by functions, e.g., linear regression.
The geometric (euclidian) distance between a point and a line is often used in optimization problems
and even in statistics  \cite{C} , \cite{M}.
In this article $d_j$ is always the geometric distance. 

Any norm on $\R^m$ leads to a definition of $d(g,P)$.
In particular, 
$$d(g,P) := \| (d_1,\ldots,d_m)\|_p = \| d\|_p = \left( \sum_{j=1}^m d_j^p\right)^{1/p} \text{Êfor } 1\leq p $$
and
$$d(g,P) = \| (d_1,\ldots,d_m)\|_\infty = \| d\|_\infty = \max\{ d_j : 1\leq j\leq m\} \text{ for } p=\infty$$
are invariant under permutations of the set $P$. 
Furthermore, the summands could be weighted to obtain non-symmetric distances \cite{S}.
In this article we consider the symmetric distances given by $\|d\|_p$ for $1\leq p \leq \infty$.
	
The $L^2$-norm, that is frequently used, corresponds to the method of least squares \cite{C}.
The $L^2$-norm is among all $L^p$-norms the only norm which is implied by an inner product.
This simplifies many calculations. 
In statistic the $L^2$-norm occurs very often, e.g., in the case of linear regression,
since the minimum with respect to the $L^2$-norm coincides with the maximum-likelihood estimate 
for normally distributed random variables.
The $L^1$-norm appears frequently in optimization problems \cite{S}
and is also used in statistics investigating least absolute deviations  \cite{BS}.
The $L^\infty$-norm is appropriate to measure the quality 
of an interpolation by functions \cite{S},\cite{SW}.
\subsection{Motivation for $L^p$-norms with arbitrary $1\leq p\leq\infty$}
Concentrating on the pure optimization problem for a generic point set we notice two things:
On the one hand the optimal lines for $p=1$ $p=2$ and $p=\infty$ are different and even far apart
from each other in the parameter space of lines 
(see subsection \ref{sec:BekannteResultateP=1,2,unendlich} 
and Figures \ref{Abb:Optp1} and \ref{Abb:Optp-max}).
On the other hand the function to be minimized, $\|d\|_p$, is continuous in $p$ and
in the parameters of the line.
Hence, the $L^p$-optimal lines move through the set of given points as $p$ changes. 
Our interest is mainly in the explicit determination of the $L^p$-optimal lines and their properties 
and not in the minimal $L^p$-distance.
In particular, we want to observe the limits to the extreme norms, $p\to 1$ and $p\to\infty$, and
to the euclidian norm for optimal lines.
It is sufficient to minimize the functions $f_p:= d(g,P)^p$ for $1\leq p <\infty$ in order to find the lines
with minimal distance $d(g,P)=\|d\|_p$.
\subsection{Formulation of the problem}
We investigate the simplest geometric non-trivial situation, i.e., $m=3$, 
the points $p_1,p_2,p_3$ are the vertices of an equilateral triangle $D$ 
and $d_j$ is the euclidian distance between a point $p_j$ and  a line $g\subset \R^2$.
We determine the global minima of the functions
$$f_p:=\sum_{j=1}^m d_j^p \text{ for } 1\leq p<\infty \text{ and } f_\infty :=\max\{ d_j :j=1,\ldots,m\}$$ 
and the corresponding lines for all $1\leq p\leq\infty$. 
These lines are called {\it $L^p$-optimal} or simply {\it optimal} lines.
In particular, we obtain results on the dependence of the set of $L^p$-optimal lines 
on the parameter $p$.
\subsection{Sketch of the solution}
The set of optimal lines is invariant under the symmetry group of the triangle $D$ for every $p$.
Since lines are not  invariant under a rotation through an angle of $2\pi/3$,
there exist at least three optimal lines. 

The relevant known results for the cases $p=1$, $p=2$ and $p=\infty$ are stated
in subsection \ref{sec:BekannteResultateP=1,2,unendlich}.
In section \ref{sec:EigenschaftenOptimaler Geraden} we show general properties of optimal lines 
with respect to three points for $1<p<\infty$. 
Combining these facts with the symmetries of the triangle $D$ we reduce the domain of $f_p$ 
in  section \ref{sec:Reduktion} to an even smaller compact set $M$. 
We prove absence of critical points of $f_p$ in the interior of $M$ for $p\ne 2, 4/3$ 
in subsection \ref{subsec:keine-kritischen-Punkte}.
This proof is the essential ingredient of the solution.
For $p=2$ and $p=4/3$ there exist one-dimensional submanifolds of critical points 
which intersect the boundary of $M$. 
Investigating the functions $f_p$ on the boundary of $M$ in subsection \ref{subsec:Rand}
we are able to determine the minima of the functions $f_p$ on $M$ exactly.
\subsection{Known results for $p\in\{ 1,2,\infty\}$}
\label{sec:BekannteResultateP=1,2,unendlich}
The lines with minimal $L^p$-distance to an arbitrary finite set $\subset\R^2$ are known for 
$p=1$, $p=2$ and $p=\infty$ \cite{C}, \cite{S}.
See also  \cite{AP} for a self-contained introduction to the problem of lines with minimal algebraic or
geometric $L^p$-distance to a finite set in the plane for $1\leq p\leq \infty$.
The first descriptions of $L^2$-optimal lines can be found in  \cite{A}, \cite{K}, \cite{P} and \cite{M}.
Laplace \cite{BS} has already solved the problem of algebraically $L^1$-optimal lines. 
These ideas can be adapted to determine geometrically $L^1$- and $L^\infty$-optimal lines exactly. 

We denote the length of the sides of the triangle $D$ by $s$ in this subsection.
\begin{figure}
\noindent
\begin{minipage}[b]{.49\linewidth}
  \mbox{\scalebox{.28}{\includegraphics{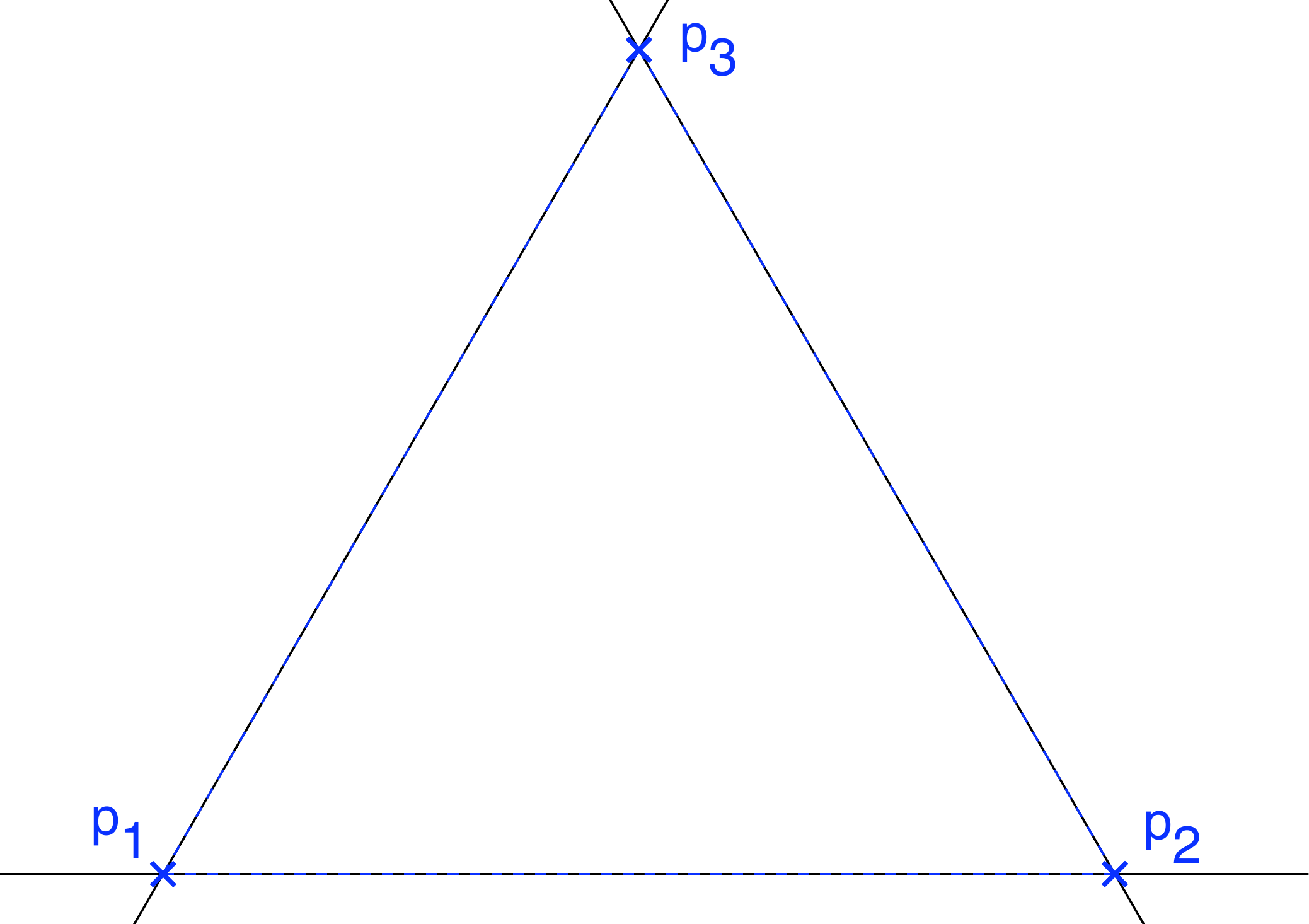}}}
\caption{$p=1$}
\label{Abb:Optp1}
\end{minipage}
\begin{minipage}[b]{.49\linewidth}
  \mbox{\scalebox{.28}{\includegraphics{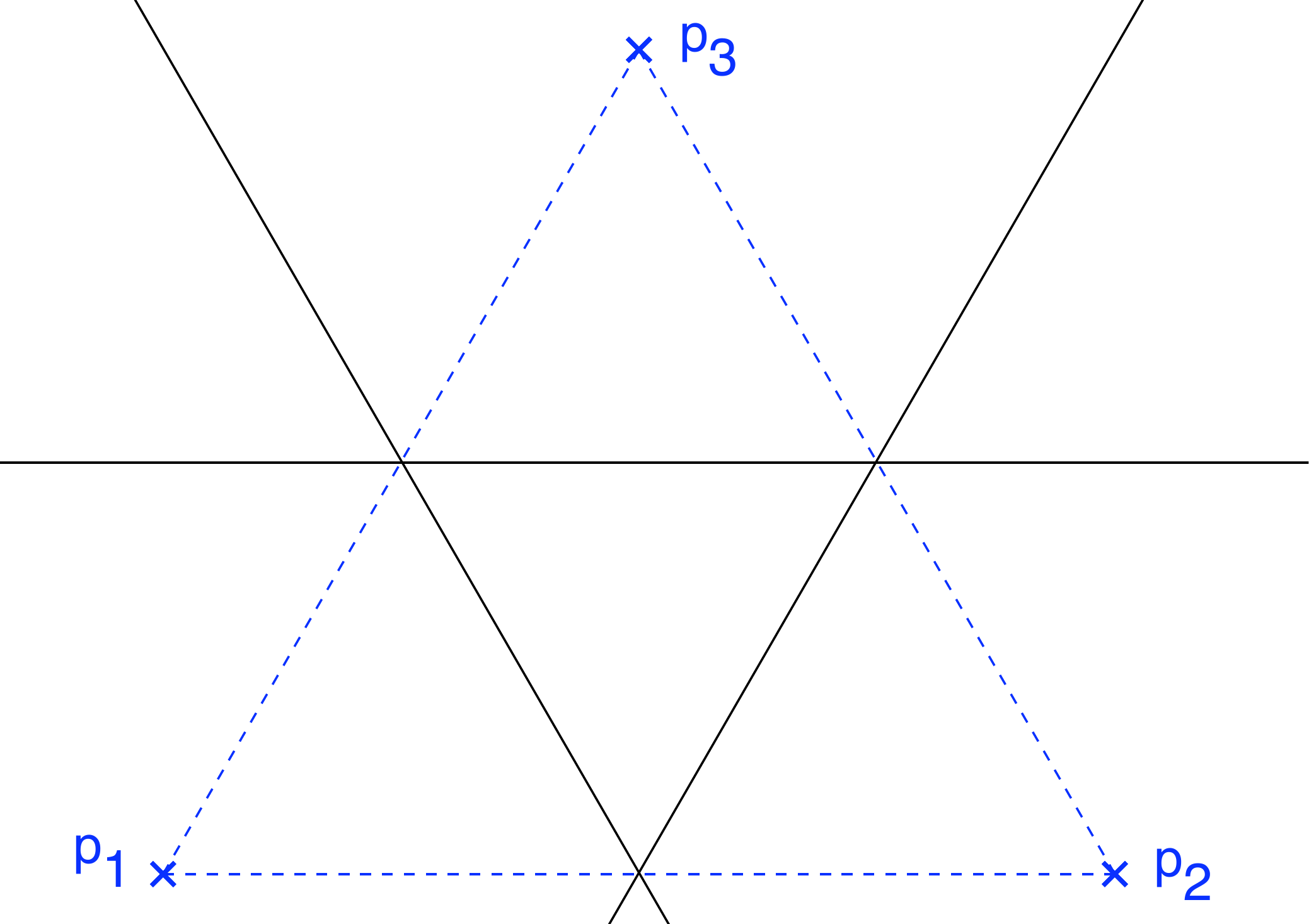}}}
\caption{$p=\infty$}
\label{Abb:Optp-max}
\end{minipage}
\end{figure}
\subsubsection{Absolute geometric distance ($p=1$)}
The minimum of the function $f_1=\sum_{j=1}^m d_j$ is attained at lines 
containing at least two of the points $p_j$. 
A line containing exactly one of the points $p_j$ is never optimal.
A line $g$ containing none of the points $p_j$ is optimal if and only if 
there exist optimal lines $g_1$ and $g_2$ each containing two of the points $p_j$ and parallel to $g$ 
such that $g$ but none of the points $p_j$ lie between $g_1$ and $g_2$.

Since the three sides of the triangle $D$ are not parallel, 
the optimal lines are exactly the three lines containing two of the points $p_1,p_2,p_3$ 
(see Figure \ref{Abb:Optp1}).
Hence, the global minimum of the function $f_1=d_1+d_2+d_3$ is $\sqrt{3}s/2$, i.e.,
the length of the height of the triangle $D$.
\subsubsection{Geometric least squares ($p=2$)}
The minimum of the function $f_2=\sum_{j=1}^m d_j^2$ is attained at a line $g$ if and only if
$g$ contains the center of mass $\bar{p}$ of the set $\{p_1,\ldots,p_m\}$ and 
a normal vector of $g$ is an eigenvector of the smallest eigenvalue of the symmetric matrix
$S=\sum_{j=1}^m(p_j-\bar{p}) (p_j-\bar{p})^T$.

In our situation the set $\{p_1,p_2,p_3\}$ and $S$ are invariant under rotations around $\bar{p}$ 
through an angle of $2\pi/3$. Since the eigenspaces of the symmetric matrix $S$ are perpendicular,
$S$ has a two-dimensional eigenspace. 
This means that the optimal lines are exactly the lines containing $\bar{p}$ (see Figure \ref{Abb:OptB1}). The global minimum of the function $f_2=d_1^2+d_2^2+d_3^2$ is $s^2/2$.
\subsubsection{Maximal geometric distance ($p=\infty$)}
The minimum of the function $f_\infty = \max\{ d_j : j=1,\ldots m\}$ is attained at a line $g$ 
if and only if $g$ has the following properties:
There exists a line $l$ parallel to $g$ containing two of the points $p_j$.
There exists a point $p_k\not\in l$ such that the geometric distance between $g$ and $l$ is equal
to $d_k$ and $d_j\leq d_k$ for all $j$.

Hence, a line $g$ has minimal $L^\infty$-distance to the vertices $p_1,p_2,p_3$ 
of an equilateral triangle $D$ if and only if $d_1=d_2=d_3$ (see Figure \ref{Abb:Optp-max}).
The minimum of the function $f_\infty=\max\{d_1,d_2,d_3\}$ is $\sqrt{3}s/4$.
\section{Properties of optimal lines for $p\ne 1,\infty$}
\label{sec:EigenschaftenOptimaler Geraden}
A line $g\in\R^2$ is completely characterized by a normal vector $n\in S^1$ and a point $q_0\in g$,
i.e., $g = \{Êq\in\R^2:\langle n,q\rangle = \langle n,q_0\rangle\}$. Set $c:= \langle n,q_0\rangle\in\R$.
The geometric distance between $g$ and $p_j$ is given by $d_j=|c-\langle n,p_j\rangle|$.
Hence, we investigate the function
$$f(c,n) := f_p(c,n) = \sum_{j=1}^3 |c-\langle n,p_j\rangle|^p 
= \sum_{j\in J_+}(\langle n,p_j\rangle-c)^p+\sum_{j\in J_-}(c-\langle n,p_j\rangle)^p$$
where the decomposition of the index set $\{1,2,3\} = J_+\cup J_0\cup J_-$ is defined by
$J_+:=\{Êj:\langle n,p_j\rangle>c\}$, $J_0=\{ j:\langle n,p_j\rangle = c\}$ and
$J_-:=\{ j:\langle n,p_j\rangle<c\}$.
\begin{lemma}
Let $1<p<\infty$. If  $c=\langle n,q\rangle$ is an $L^p$-optimal line, then 
\begin{equation}
\label{eq:partielle Ableitung nach c}
\sum_{j\in J_+}d_j^{p-1} = \sum_{j\in J_-}d_j^{p-1}.
\end{equation}
\end{lemma}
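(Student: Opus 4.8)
The plan is to treat $f_p$ as a function of the real parameter $c$ with $n$ held fixed, and to compute its derivative at an optimal line. Since $g$ is $L^p$-optimal, the value $c$ minimizes $c\mapsto f(c,n)$ in particular, so the derivative in $c$ must vanish there, provided the function is differentiable at that point. The first step is therefore to establish differentiability: on the open set where $c\notin\{\langle n,p_1\rangle,\langle n,p_2\rangle,\langle n,p_3\rangle\}$ the index sets $J_+,J_-$ are locally constant and $J_0=\emptyset$, so $f$ is a finite sum of terms $(\langle n,p_j\rangle-c)^p$ and $(c-\langle n,p_j\rangle)^p$, each smooth in $c$ there because the base is positive and $p>1$.

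Next I would handle the boundary case where some $p_j$ lies on $g$, i.e. $J_0\neq\emptyset$. Here one must check that $f(\cdot,n)$ is still differentiable in $c$ at such a point: the contribution of an index $j\in J_0$ is $|c-\langle n,p_j\rangle|^p$, and since $p>1$ the function $t\mapsto|t|^p$ has derivative $0$ at $t=0$; hence these terms do not obstruct differentiability and contribute nothing to the derivative. So in all cases $f(\cdot,n)$ is differentiable at the optimal $c$ and
\[
0=\frac{\partial f}{\partial c}(c,n)=-p\sum_{j\in J_+}(\langle n,p_j\rangle-c)^{p-1}+p\sum_{j\in J_-}(c-\langle n,p_j\rangle)^{p-1}.
\]
Dividing by $p$ and rewriting $(\langle n,p_j\rangle-c)=d_j$ for $j\in J_+$ and $(c-\langle n,p_j\rangle)=d_j$ for $j\in J_-$ yields exactly \eqref{eq:partielle Ableitung nach c}.

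The main obstacle — really the only subtle point — is the treatment of the case $J_0\neq\emptyset$: one might worry that $|t|^p$ is not smooth at $0$, but the key observation is that for $p>1$ it is still $C^1$ there with vanishing derivative, so the one-sided derivatives of $f(\cdot,n)$ agree and equal the stated expression (with $J_0$-terms absent). For $p=1$ this fails, which is why the hypothesis $1<p<\infty$ is needed; this is consistent with the $p=1$ picture in subsection~\ref{sec:BekannteResultateP=1,2,unendlich} where optimal lines pass through two of the points and the naive stationarity condition does not hold. One should also note that "optimal" means a global minimizer of $f_p$ over all lines, hence in particular a minimizer in the $c$-direction for the fixed optimal $n$, which is what legitimizes setting the partial derivative to zero.
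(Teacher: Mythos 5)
Your proposal is correct and follows essentially the same route as the paper: fix $n$, differentiate $f$ with respect to $c$, observe that any index in $J_0$ contributes a term $|c-\langle n,p_j\rangle|^p$ whose derivative vanishes at the optimal $c$ because $p>1$, and set the resulting one-variable derivative to zero at the minimizer. The paper carries this out via an explicit difference quotient (with the $|\varepsilon|^p|J_0|$ term) rather than invoking $C^1$-regularity of $t\mapsto|t|^p$, but the substance is identical.
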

\begin{proof}
The function $f$ is differentiable.
If $J_0 = \{Êj:\langle n,p_j\rangle = c\}$, then 
\begin{align*}
f(c+\varepsilon, n) -f(c,n) & = |\varepsilon|^p |J_0| 
	+ \sum_{j\in J_+}(\langle n,p_j\rangle-c-\varepsilon)^p-(\langle n,p_j\rangle-c)^p \\
& \quad + \sum_{j\in J_-}(c+\varepsilon-\langle n,p_j\rangle)^p- (c-\langle n,p_j\rangle)^p\\
\lim_{\varepsilon\to 0} \frac{f(c+\varepsilon+c,n)-f(c,n)}{\varepsilon} & = 
	\left.\frac{\partial f}{\partial c}\right\vert_{(c,n)} 
	= p \left(\sum_{j\in J_-} d_j^{p-1}-\sum_{j\in J_+} d_j^{p-1}\right)
\end{align*}
for all $\varepsilon\in\R$ with $|\varepsilon|<\min\{ d_j:j\not\in J_0\}$.
\end{proof}
\begin{cor}
\label{folg:Zerlegung der Punktmenge durch optimale Gerade}
It holds $J_+\ne \emptyset$, $J_-\ne \emptyset$ and $|J_0|\leq 1$ for any optimal line.
\end{cor}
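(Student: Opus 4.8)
The plan is to read off the corollary directly from Lemma 1 (equation (\ref{eq:partielle Ableitung nach c})) together with the $p=1,\infty$ facts already recorded, treating the cases $p=1$, $p=\infty$ and $1<p<\infty$ separately so that the statement holds for \emph{all} $p$.

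For $1<p<\infty$: suppose $J_+=\emptyset$. Then (\ref{eq:partielle Ableitung nach c}) forces $\sum_{j\in J_-}d_j^{p-1}=0$, hence $d_j=0$ for every $j\in J_-$, i.e.\ $J_-=\emptyset$ as well. But then $J_0=\{1,2,3\}$, which means all three vertices lie on $g$; since the vertices of an equilateral triangle are not collinear this is impossible. By symmetry $J_-\ne\emptyset$. Now suppose $|J_0|\ge 2$, say $j,k\in J_0$ with $j\ne k$. Then $g$ passes through two of the three vertices, so $g$ is the line through two sides' endpoints, and the third vertex has distance $d\ell=\sqrt{3}s/2>0$ to $g$, where $\{\ell\}=\{1,2,3\}\setminus\{j,k\}$; this third vertex lies in $J_+$ or in $J_-$, say $J_+=\{\ell\}$ and $J_-=\emptyset$ (or vice versa). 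But then (\ref{eq:partielle Ableitung nach c}) reads $d_\ell^{p-1}=0$, contradicting $d_\ell>0$. Hence $|J_0|\le 1$.

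For the endpoint cases I would invoke the descriptions in subsection~\ref{sec:BekannteResultateP=1,2,unendlich}. For $p=\infty$ an optimal line satisfies $d_1=d_2=d_3$, and since the three vertices are not collinear these common distances are nonzero; thus no vertex lies on $g$, so $J_0=\emptyset$, and the three vertices cannot all lie strictly on one side of $g$ (their centroid separates them), giving $J_+\ne\emptyset$ and $J_-\ne\emptyset$. For $p=1$ the optimal lines are exactly the three side-lines, each of which contains two vertices and leaves the third at positive distance on one side; so $|J_0|=2$ here --- meaning the bound $|J_0|\le1$ genuinely \emph{fails} at $p=1$, and the corollary as stated should be read (and will be used) only for $1<p<\infty$. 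Consequently the honest proof is just the $1<p<\infty$ argument above, and the ``main obstacle'' is really only to be careful that the derivative identity of Lemma~1 was derived under the standing hypothesis $1<p<\infty$ and should not be over-applied to $p=1$.

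I expect no serious obstacle: the entire content is that a one-element index set makes the corresponding side of (\ref{eq:partielle Ableitung nach c}) a single positive power of a positive distance, which cannot vanish, while the opposite side must then also be a sum of positive powers of distances, at least one of which must be positive unless that side is empty too --- and emptiness of both $J_+$ and $J_-$ forces collinearity of the three vertices, which is excluded. The only point requiring a sentence of justification is that $d_j=0$ for all $j$ in an index set means $g$ contains those vertices, together with the geometric fact that an equilateral triangle's vertices are not collinear.
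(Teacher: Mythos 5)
Your argument is correct and is essentially the paper's own proof: both derive $J_+\ne\emptyset$ and $J_-\ne\emptyset$ from equation (\ref{eq:partielle Ableitung nach c}) together with the non-collinearity of the three vertices, and $|J_0|\le 1$ then follows because the three index sets partition $\{1,2,3\}$. Your side remarks about $p=1$ and $p=\infty$ are harmless but unnecessary, since the corollary sits in a section explicitly restricted to $1<p<\infty$.
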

\begin{proof}
The points $p_1,p_2,p_3$ are not collinear. Hence, $J_+\cup J_- \ne \emptyset$.
The assertion follows from equation (\ref{eq:partielle Ableitung nach c}), 
because $d_j=0$ if and only if $j\in J_0$, 
$\sum_{j\in J_+}d_j^{p-1} = 0$ if and only if $J_+=\emptyset$, and
$\sum_{j\in J_-}d_j^{p-1} = 0$ if and only if $J_-=\emptyset$.
\end{proof}
\begin{cor}
\label{folg:Abstand Punkte zur optimalen Geraden}
If $J_0=\emptyset$ for an optimal line, then there exists a permutation $\sigma$ such that
$J_+=\{ \sigma(1),\sigma(2)\}$, $J_-=\{ \sigma(3)\}$ 
or $J_-=\{ \sigma(1),\sigma(2)\}$, $J_+=\{ \sigma(3)\}$ and
$$ d_{\sigma(3)}> d_{\sigma(2)} \geq d_{\sigma(1)} >0 .$$
\end{cor}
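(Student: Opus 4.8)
The plan is to combine the partition statement from Corollary~\ref{folg:Zerlegung der Punktmenge durch optimale Gerade} with the optimality identity from the Lemma and the strict monotonicity of $t\mapsto t^{p-1}$ on $(0,\infty)$.

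First I would record the two structural facts that are available. Since $J_0=\emptyset$, none of the points $p_1,p_2,p_3$ lies on the line, hence $d_1,d_2,d_3>0$ (a point has distance $0$ exactly when its index is in $J_0$); and by Corollary~\ref{folg:Zerlegung der Punktmenge durch optimale Gerade} both $J_+$ and $J_-$ are nonempty. As $J_+\cup J_-=\{1,2,3\}$ is a disjoint union of two nonempty sets, exactly one of them is a singleton and the other has two elements. Therefore there is a permutation $\sigma$ of $\{1,2,3\}$ so that the two-element set is $\{\sigma(1),\sigma(2)\}$, the singleton is $\{\sigma(3)\}$, and, after possibly swapping $\sigma(1)$ and $\sigma(2)$, also $d_{\sigma(2)}\ge d_{\sigma(1)}$. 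Depending on whether the singleton is $J_-$ or $J_+$ this yields the first alternative $J_+=\{\sigma(1),\sigma(2)\}$, $J_-=\{\sigma(3)\}$ or the second alternative $J_-=\{\sigma(1),\sigma(2)\}$, $J_+=\{\sigma(3)\}$.

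Next I would invoke equation~\eqref{eq:partielle Ableitung nach c}: in either alternative it reads $d_{\sigma(1)}^{p-1}+d_{\sigma(2)}^{p-1}=d_{\sigma(3)}^{p-1}$. Since $d_{\sigma(1)}>0$ and $p-1>0$, the left-hand side is strictly larger than $d_{\sigma(2)}^{p-1}$, so $d_{\sigma(3)}^{p-1}>d_{\sigma(2)}^{p-1}$; because $t\mapsto t^{p-1}$ is strictly increasing on $(0,\infty)$, this gives $d_{\sigma(3)}>d_{\sigma(2)}$. Combined with the chosen ordering $d_{\sigma(2)}\ge d_{\sigma(1)}$ and $d_{\sigma(1)}>0$, this is exactly the asserted chain $d_{\sigma(3)}>d_{\sigma(2)}\ge d_{\sigma(1)}>0$.

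There is no genuine obstacle in this argument; the only two points that require a little care are that the strict positivity of $d_{\sigma(1)}$ (which is where the hypothesis $J_0=\emptyset$ enters) is precisely what upgrades the middle comparison to the \emph{strict} inequality $d_{\sigma(3)}>d_{\sigma(2)}$, and that the ordering $d_{\sigma(2)}\ge d_{\sigma(1)}$ is merely built into the choice of $\sigma$ rather than something that must be established.
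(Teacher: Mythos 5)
Your proof is correct and follows essentially the same route as the paper: both use Corollary~\ref{folg:Zerlegung der Punktmenge durch optimale Gerade} to split the indices into a singleton and a pair, then read off equation~\eqref{eq:partielle Ableitung nach c} as $d_{\sigma(1)}^{p-1}+d_{\sigma(2)}^{p-1}=d_{\sigma(3)}^{p-1}$ and use positivity of the summands together with the strict monotonicity of $t\mapsto t^{p-1}$ for $p>1$ to obtain the strict inequality $d_{\sigma(3)}>d_{\sigma(2)}$. Your write-up is slightly more explicit about how the partition and the ordering $d_{\sigma(2)}\ge d_{\sigma(1)}$ are arranged, but there is no substantive difference.
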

\begin{proof}
One side of equation (\ref{eq:partielle Ableitung nach c}) consists of exactly one summand
$d_{\sigma(3)}^{p-1}$. 
The other side of the equation is of the form $d_{\sigma(1)}^{p-1} + d_{\sigma(2)}^{p-1}$
with $0<d_{\sigma(1)}^{p-1} \leq d_{\sigma(2)}^{p-1}$, since $J_0=\emptyset$.
Consequently, $d_{\sigma(1)}^{p-1} < d_{\sigma(3)}^{p-1}$ and 
$d_{\sigma(2)}^{p-1}  < d_{\sigma(3)}^{p-1}$. 
Now, $p>1$ implies $d_{\sigma(1)} \leq d_{\sigma(2)}$ and $d_{\sigma(2)} < d_{\sigma(3)}$.
\end{proof}
\begin{cor}
\label{folg:1Punkt-optimal}
If an optimal line contains one of the points $p_j$, 
then this line is a perpendicular bisector of the triangle $D$.
\end{cor}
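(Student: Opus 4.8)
The plan is to combine Corollary \ref{folg:Zerlegung der Punktmenge durch optimale Gerade} with the optimality condition for $c$ in Lemma 1, and then use the geometry of the equilateral triangle. Suppose $g$ is an optimal line containing exactly one vertex, say $p_3$; by Corollary \ref{folg:Zerlegung der Punktmenge durch optimale Gerade} it cannot contain two of them, so $|J_0|=1$ with $J_0=\{3\}$, hence $d_3=0$, and $J_+,J_-$ are each nonempty, so one of them is $\{1\}$ and the other $\{2\}$. Equation (\ref{eq:partielle Ableitung nach c}) then reads $d_1^{p-1}=d_2^{p-1}$, and since $p>1$ this forces $d_1=d_2$. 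So the two vertices not on $g$ are equidistant from $g$, i.e. $g$ contains $p_3$ and is equidistant from $p_1$ and $p_2$.

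The next step is purely elementary plane geometry: I claim that a line through $p_3$ equidistant from $p_1$ and $p_2$ is either the line through $p_3$ parallel to the side $p_1p_2$, or the perpendicular bisector of the side $p_1p_2$ (which passes through $p_3$ because the triangle is equilateral). Indeed, the set of lines equidistant from two fixed points $p_1\ne p_2$ consists of those parallel to $p_1p_2$ together with those passing through the midpoint $\tfrac12(p_1+p_2)$; among these, the ones also passing through $p_3$ are exactly the parallel-to-$p_1p_2$ line through $p_3$ and the line through $p_3$ and the midpoint of $p_1p_2$, and the latter is the perpendicular bisector by equilaterality.

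It remains to exclude the first alternative. Suppose $g$ is the line through $p_3$ parallel to $p_1p_2$. Then $d_1=d_2=0$, forcing $J_+=J_-=\emptyset$, contradicting Corollary \ref{folg:Zerlegung der Punktmenge durch optimale Gerade}; equivalently, such a $g$ contains the two vertices $p_1,p_2$ only in the degenerate reading and in fact contains none of them with positive distance being impossible — more carefully, the parallel through $p_3$ to $p_1p_2$ has $d_1=d_2>0$ equal to the height, but then $g$ passes through $p_3$ and $J_0=\{3\}$ while $J_+,J_-$ are the two singletons as above, so this case is actually the same configuration and is \emph{not} excluded by the corollary. Here is the main obstacle: ruling out the parallel line requires an actual comparison of values of $f_p$, not just the first-order condition. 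So I would instead finish by \emph{direct comparison}: for the line $g_\parallel$ through $p_3$ parallel to $p_1p_2$ one has $f_p(g_\parallel)=2(\sqrt3 s/2)^p$, while for the perpendicular bisector $g_\perp$ through $p_3$ and the midpoint of $p_1p_2$ one has $d_1=d_2=s/2$, $d_3=0$, so $f_p(g_\perp)=2(s/2)^p<2(\sqrt3 s/2)^p=f_p(g_\parallel)$ since $p>0$. Hence $g_\parallel$ is never optimal, and the only surviving possibility is that $g$ is a perpendicular bisector, as claimed. I expect the delicate point to be exactly this last exclusion — the first-order stationarity in $c$ is satisfied by both candidate lines through $p_3$, so one genuinely needs the value comparison (or, alternatively, a variation in the normal direction $n$) to eliminate the parallel line.
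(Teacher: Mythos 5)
Your proof reaches the right conclusion, and its first half coincides with the paper's argument: $|J_0|=1$ together with Corollary \ref{folg:Zerlegung der Punktmenge durch optimale Gerade} forces $|J_+|=|J_-|=1$, and equation (\ref{eq:partielle Ableitung nach c}) then gives $d_1=d_2$. But the ``main obstacle'' you identify at the end is not actually there, and the claim you make while discussing it is false. The condition $|J_+|=|J_-|=1$ carries more information than ``$d_1=d_2$'': it says that $p_1$ and $p_2$ lie on \emph{opposite} sides of $g$. A line through $p_3$ that is equidistant from $p_1$ and $p_2$ and separates them must contain the midpoint of $p_1p_2$ (the signed distances $\langle n,p_1\rangle-c$ and $\langle n,p_2\rangle-c$ have equal modulus and opposite signs, so $\langle n,\tfrac12(p_1+p_2)\rangle=c$), hence it is the median from $p_3$, i.e.\ the perpendicular bisector of $p_1p_2$. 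The parallel line through $p_3$ has $p_1$ and $p_2$ on the \emph{same} side, so for it one of $J_+$, $J_-$ is empty and equation (\ref{eq:partielle Ableitung nach c}) reads $0=2(\sqrt{3}s/2)^{p-1}$; it is therefore \emph{not} stationary in $c$, contrary to your assertion that ``the first-order stationarity in $c$ is satisfied by both candidate lines through $p_3$,'' and it is already excluded by Corollary \ref{folg:Zerlegung der Punktmenge durch optimale Gerade}. Your fallback --- the direct value comparison $2(s/2)^p<2(\sqrt{3}s/2)^p$ --- is correct and salvages the argument, so the proof is not wrong in the end; it just carries an unnecessary case split and a mistaken diagnosis of where the difficulty lies. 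The paper's proof is essentially your first paragraph alone; the one thing worth adding to it explicitly is the elementary step that ``separating $+$ equidistant $+$ through $p_3$'' forces the line through the midpoint of $p_1p_2$.
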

\begin{proof}
The condition $|J_0|\geq 1$ and Corollary \ref{folg:Zerlegung der Punktmenge durch optimale Gerade} 
imply $|J_0|=|J_+|=|J_-|=1$. Hence, there exists a permutation such that 
$J_0=\{\sigma(1)\}$, $J_+=\{\sigma(2)\}$ and $J_-=\{\sigma(3)\}$. 
Equation \ref{eq:partielle Ableitung nach c} implies $d_{\sigma(2)}^{p-1}=d_{\sigma(3)}^{p-1}$. 
Consequently, $d_{\sigma(2)}=d_{\sigma(3)}$ since $p>1$.
\end{proof}
\section{Reduction}
\label{sec:Reduktion}
\begin{figure}
\begin{center}
  \mbox{\scalebox{.35}{\includegraphics{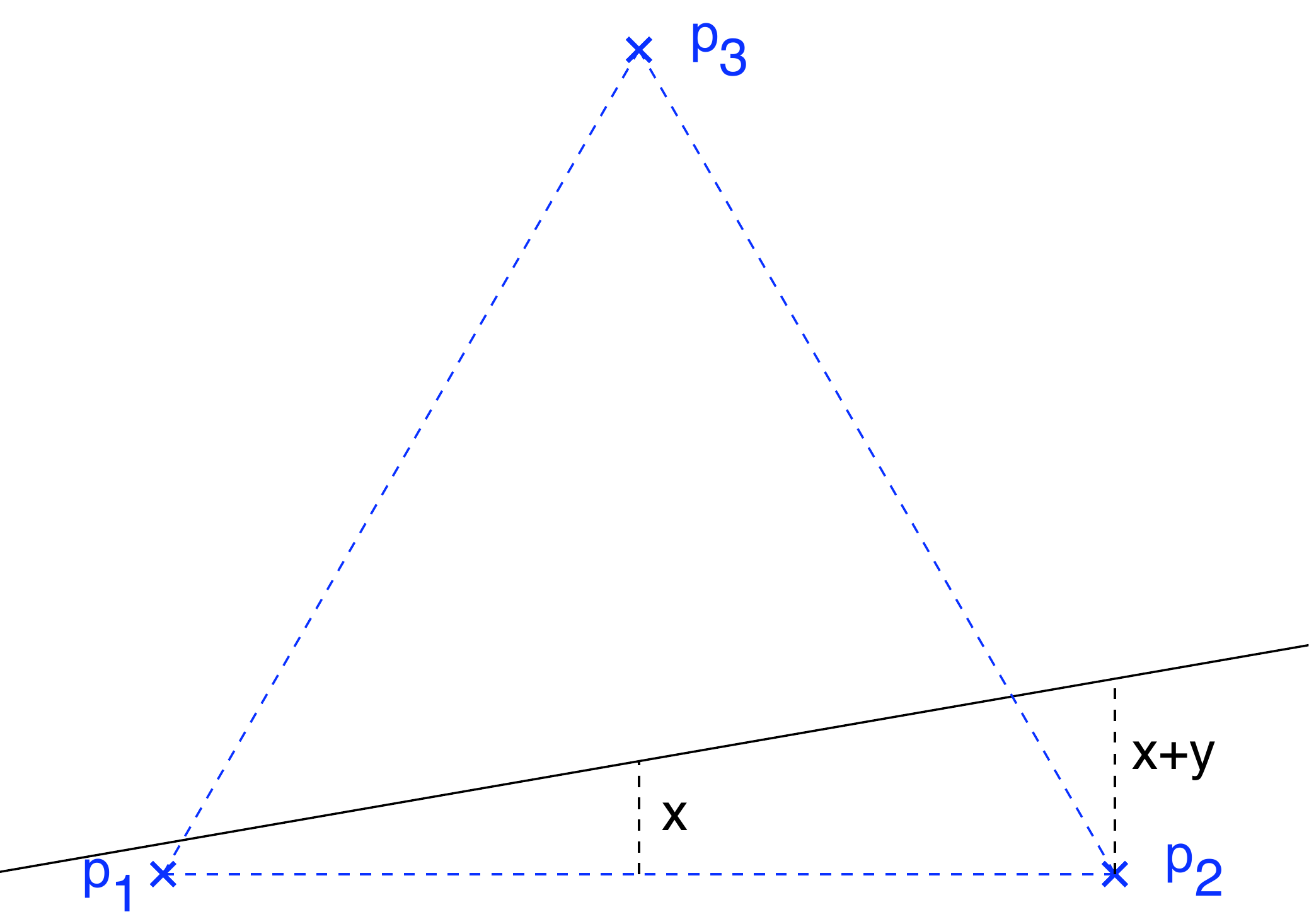}}}
\caption{}
\label{Abb:OptiRedukt}
\end{center}
\end{figure}
The set of optimal lines is equivariant with respect to isometries and dilations.
Hence, we assume that $p_1=(-1/2,0)$, $p_2=(1/2,0)$, $p_3=(0,\sqrt{3}/2)$.
In particular, $s=1$.
Due to the rotation symmetry of $D$ it is sufficient to find optimal lines which intersect the sides
$p_1p_3$ and $p_2p_3$. 
The reflection symmetry of $D$ allows us to assume $d_1\leq d_2$ additionally
(see Figure \ref{Abb:OptiRedukt}).
\begin{lemma}
Let $g$ be an optimal line.
If $J_-=\{ p_1,p_2\}$, $J_+=\{ p_3\}$ and $(0,x)\in g$, then $0<x<\sqrt{3}/4$.
\end{lemma}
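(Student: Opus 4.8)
The plan is to exploit the information we already have from Corollary~\ref{folg:Abstand Punkte zur optimalen Geraden}: since $J_0=\emptyset$, $J_-=\{p_1,p_2\}$ and $J_+=\{p_3\}$, the distance $d_3$ to the single point on the positive side must strictly exceed both $d_1$ and $d_2$, and all three are positive. I will translate each of these inequalities into a constraint on $x$, the $y$-intercept of the line, together with the slope parameter.

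First I would parametrize the line as $c=\langle n,q\rangle$ with a unit normal $n=(\sin\theta,\cos\theta)$, so that a line hitting the sides $p_1p_3$ and $p_2p_3$ and passing through $(0,x)$ has $c=x\cos\theta$ and the signed values $\langle n,p_j\rangle-c$ can be written explicitly in terms of $\theta$ and $x$. Concretely, $\langle n,p_3\rangle-c=\tfrac{\sqrt3}{2}\cos\theta-x\cos\theta=(\tfrac{\sqrt3}{2}-x)\cos\theta$, while $\langle n,p_{1,2}\rangle-c=\mp\tfrac12\sin\theta-x\cos\theta$. From $J_+=\{p_3\}$ we get $\tfrac{\sqrt3}{2}-x>0$ (taking $\cos\theta>0$ in our chosen chart), hence $x<\sqrt3/2$; but the sharper bound $x<\sqrt3/4$ should come from comparing $d_3$ with $d_1+d_2$ or rather from the optimality equation \eqref{eq:partielle Ableitung nach c} specialized to this index split, namely $d_3^{p-1}=d_1^{p-1}+d_2^{p-1}$, which forces $d_3>d_2$ strictly. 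Geometrically, the centroid $\bar p=(0,\sqrt3/6)$ and the midpoint of side $p_1p_2$ is the origin; the value $\sqrt3/4$ is the $y$-coordinate of the midpoints of the slanted sides, i.e. the foot on the axis of the line through $p_3$ parallel to... — I would pin this down by checking that at $x=\sqrt3/4$ one has $d_3=d_2$ (or $d_3=d_1$) for the relevant slope, so that $x\ge\sqrt3/4$ would contradict $d_3>d_2$.

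For the lower bound $x>0$: if $x\le 0$ then the point $(0,x)$ lies on the closed lower half-plane, and I would argue that the line then cannot separate $p_3$ from $\{p_1,p_2\}$ in the way required — more precisely, with $(0,x)\in g$ and $x\le 0$, continuity and the requirement that $g$ meet both slanted sides would push $p_1$ or $p_2$ onto the same side as $p_3$, or make $d_1$ (resp. $d_2$) too large relative to $d_3$, again violating $d_3>\max(d_1,d_2)$. Alternatively, $x\le 0$ combined with the constraint that $g$ crosses $p_1p_3$ and $p_2p_3$ (and our normalization $d_1\le d_2$) forces the intersections to be too far down the sides, contradicting optimality. I expect the cleanest route is: express $d_1,d_2,d_3$ as the three affine functions above, impose $0<d_1\le d_2<d_3$ from Corollary~\ref{folg:Abstand Punkte zur optimalen Geraden}, and show the feasible region in the $(\theta,x)$-plane projects exactly onto $0<x<\sqrt3/4$.

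The main obstacle will be handling the slope $\theta$ cleanly: the bound on $x$ is not uniform in $\theta$ pointwise, so I must either eliminate $\theta$ by taking the union over all admissible slopes (those for which $g$ genuinely crosses the two slanted sides) or observe that the extreme cases $x\to 0$ and $x\to\sqrt3/4$ are approached in the limit $\theta\to 0$ (horizontal line) versus the slopes parallel to the slanted sides. Care is needed at the boundary of the chart $\cos\theta=0$ (vertical lines), which by the $d_1\le d_2$ reduction and the requirement to hit both slanted sides can be excluded directly. Once the admissible $\theta$-range is identified, the two inequalities $d_3>d_1$ and $d_3>d_2$ become linear in $x$ for fixed $\theta$ and the envelope over $\theta$ yields precisely $0<x<\sqrt3/4$.
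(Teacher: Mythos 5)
Your proposal is correct and follows essentially the same route as the paper: the sign conditions from $J_-=\{p_1,p_2\}$, $J_+=\{p_3\}$ give $c>|n_1|/2\geq 0$ and $n_2>0$, hence $x>0$, and the inequality $\max\{d_1,d_2\}<d_3$ from Corollary~\ref{folg:Abstand Punkte zur optimalen Geraden} gives $n_2x+|n_1|/2<n_2(\sqrt3/2-x)$, i.e.\ $0\leq|n_1|<n_2(\sqrt3-4x)$ and thus $x<\sqrt3/4$. Your worry about non-uniformity in the slope is unnecessary — the last inequality forces $x<\sqrt3/4$ for every admissible normal direction at once, so no envelope argument over $\theta$ is needed.
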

\begin{proof}
Let the optimal line $g$ be given by the equation $c=\langle n,q\rangle$ 
with $n=(n_1,n_2)$ and $c\in\R$. The condition  $(0,x)\in g$ implies $c=n_2x$. 
Furthermore, $-n_1/2<c$ and  $n_1/2<c$, since $J_-=\{p_1,p_2\}$. 
Thus, $0\leq |n_1/2|<c = n_2x$. 
The condition $J_+=\{ p_3\}$ implies $n_2\sqrt{3}/2>c>0$. Hence, $n_2>0$ and $x>0$.

Corollary \ref{folg:Abstand Punkte zur optimalen Geraden} yields the inequality
$\max\{ d_1,d_2\} < d_3$ for any optimal line $g$.
It follows from  $\max\{ d_1,d_2\} = c+|n_1|/2 = n_2x+|n_1|/2$ and  $d_3=n_2(\sqrt{3}/2-x)$ that
$|n_1|<n_2(\sqrt{3}-4x)$. Now $n_2>0$ implies $x<\sqrt{3}/4$.
\end{proof}
It is sufficient to consider lines containing the points $(0,x)$ and $(1/2,x+y)$ 
such that $0<x<\sqrt{3}/4$ and $0\leq y<x$ (see Figure \ref{Abb:OptiRedukt}) to find optimal lines. 
Such a line is spanned by the vector $(1,2y)$ and it is given by the equation 
$c=\langle n,q\rangle$ with normal vector $n=(-2y,1)(1+4y^2)^{-1/2}$ and $c=x(1+4y^2)^{-1/2}$.
The geometric distances between the line and the points $p_1,p_2,p_3$ are
$d_1= (x-y)(1+4y^2)^{-1/2}$, $d_2=(x+y)(1+4y^2)^{-1/2}$ and 
$d_3=(\sqrt{3}/2-x)(1+4y^2)^{-1/2}$.
We want to determine the global minimum of the function
\begin{multline}
\label{eq:f(x,y) und M}
f(x,y)= (1+4y^2)^{-p/2}\left( (x-y)^p+(x+y)^p+\left(\frac{\sqrt{3}}{2}-x\right)^p\right)\\
\text{ on the set } M:=\{0\leq x\leq \sqrt{3}/4, \, 0\leq y\leq x\}.
\end{multline}
We already know that the minimum of $f$ is not attained at points with $x=0$ or $x=\sqrt{3}/4$
for $1<p<\infty$. 
Corollary \ref{folg:1Punkt-optimal} implies that global minimum of $f$ on  the boundary component
$\{ 0\leq x=y \leq \sqrt{3}/4\}$ is attained only at $x=y=\sqrt{3}/6$.
\section{Global minimum on a compact set}
In this section we determine the global minimum of the function $f$ on the set $M$ for all $1<p<\infty$.
We partially solve the system of equations defining critical points of $f$ 
in subsection \ref{subsec:Gleichungssystem-kritische, innere Punkte}.
In subsection \ref{subsec:keine-kritischen-Punkte} we show that $f$ has critical points in the interior
$M^\circ$ of $M$ if and only if $p=4/3$ or $p=2$.
Comparing the local minima of $f$ on the different boundary components of $M$ in subsection
\ref{subsec:Rand} we obtain explicit formulas of the global minimum of $f$ on $M$.
\subsection{Equations characterizing critical points}
\label{subsec:Gleichungssystem-kritische, innere Punkte}
We investigate $f(x,y)$ on $M^\circ=\{Ê(x,y)\in\R^2: 0<x<\sqrt{3}/4, 0<y<x\}$.
\begin{lemma}
\label{lem:krtischerPunkt-Gleichungen}
It holds $df = (0,0)$ if and only if
\begin{equation}
\label{eq:df=0(A)}
\left(\frac{\sqrt{3}}{2}-x\right)^{p-1} = (x-y)^{p-1}+(x+y)^{p-1} 
\end{equation}
and
\begin{equation}
\label{eq:df=0(B)}
0 = (x+y)^{p-1}\left(2\sqrt{3}y-1\right) +(x-y)^{p-1}\left(2\sqrt{3}y+1\right).
\end{equation}
\end{lemma}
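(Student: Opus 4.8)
The plan is to differentiate $f$ directly on $M^\circ$, where all three bases $x-y$, $x+y$ and $\sqrt{3}/2-x$ are strictly positive (the last one because $x<\sqrt{3}/4<\sqrt{3}/2$). Hence $f$ is $C^\infty$ there and the absolute values in the definition of $f$ play no role. I would write $f=g(y)\,h(x,y)$ with $g(y)=(1+4y^2)^{-p/2}$ and $h(x,y)=(x-y)^p+(x+y)^p+(\sqrt{3}/2-x)^p$, both positive on $M^\circ$.

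First, $\partial_x f = g(y)\,\partial_x h$ with $\partial_x h = p\bigl((x-y)^{p-1}+(x+y)^{p-1}-(\sqrt{3}/2-x)^{p-1}\bigr)$; since $g(y)>0$ and $p>0$, the equation $\partial_x f=0$ is equivalent to \eqref{eq:df=0(A)}. For the $y$-derivative I would use $g'(y)=-4py(1+4y^2)^{-p/2-1}$ and $\partial_y h = p\bigl((x+y)^{p-1}-(x-y)^{p-1}\bigr)$, and then, since $(1+4y^2)^{-p/2}=(1+4y^2)(1+4y^2)^{-p/2-1}$, factor the positive quantity $p(1+4y^2)^{-p/2-1}$ out of $\partial_y f = g'h+g\,\partial_y h$. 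This shows that $\partial_y f=0$ is equivalent to
\[
-4y\,h+(1+4y^2)\bigl((x+y)^{p-1}-(x-y)^{p-1}\bigr)=0,
\]
which I will call equation (B$'$). So at this stage $df=(0,0)$ is equivalent to the pair \eqref{eq:df=0(A)} and (B$'$).

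It remains to show that, \emph{given} \eqref{eq:df=0(A)}, equation (B$'$) is equivalent to \eqref{eq:df=0(B)}; this is the only place where real computation is needed, and I expect it to be the main (though routine) obstacle. The key trick is that \eqref{eq:df=0(A)} lets one rewrite $h$: substituting $(\sqrt{3}/2-x)^{p-1}=(x-y)^{p-1}+(x+y)^{p-1}$ into $h=(x-y)(x-y)^{p-1}+(x+y)(x+y)^{p-1}+(\sqrt{3}/2-x)(\sqrt{3}/2-x)^{p-1}$ and collecting terms gives $h=(\sqrt{3}/2-y)(x-y)^{p-1}+(\sqrt{3}/2+y)(x+y)^{p-1}$. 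Inserting this into (B$'$) and collecting the coefficients of $(x-y)^{p-1}$ and $(x+y)^{p-1}$, which simplify to $-4y(\sqrt{3}/2-y)-(1+4y^2)=-(2\sqrt{3}y+1)$ and $-4y(\sqrt{3}/2+y)+(1+4y^2)=-(2\sqrt{3}y-1)$, turns (B$'$) into $(2\sqrt{3}y+1)(x-y)^{p-1}+(2\sqrt{3}y-1)(x+y)^{p-1}=0$, i.e.\ \eqref{eq:df=0(B)}. Since every step is reversible, \eqref{eq:df=0(A)} together with \eqref{eq:df=0(B)} also implies (B$'$).

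Putting it together, $df=(0,0)$ $\iff$ $\partial_x f=0$ and $\partial_y f=0$ $\iff$ \eqref{eq:df=0(A)} and (B$'$) $\iff$ \eqref{eq:df=0(A)} and \eqref{eq:df=0(B)}. The only thing I would be careful about is the sign and exponent bookkeeping in $g'$ and in the two coefficients above, which I would re-verify explicitly.
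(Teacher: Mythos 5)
Your proposal is correct and follows essentially the same route as the paper: compute the two partial derivatives, observe that $\partial_x f=0$ is exactly \eqref{eq:df=0(A)}, and then use \eqref{eq:df=0(A)} to eliminate $(\sqrt{3}/2-x)^{p-1}$ from the $\partial_y f=0$ condition, which after collecting the coefficients of $(x\pm y)^{p-1}$ yields \eqref{eq:df=0(B)}. The sign and coefficient bookkeeping you flagged checks out, and the reversibility remark covers the ``if'' direction just as in the paper.
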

\begin{proof}
The function $f$ is differentiable. The partial derivatives are
$$f_x(x,y) = p(1+4y^2)^{-p/2}\left( (x-y)^{p-1}+(x+y)^{p-1}-\left(\frac{\sqrt{3}}{2}-x\right)^{p-1}\right) $$
and
\begin{multline*}
f_y(x,y) = -4py(1+4y^2)^{-p/2-1}\left( (x-y)^p+(x+y)^p+\left(\frac{\sqrt{3}}{2}-x\right)^p\right) \\
+p(1+4y^2)^{-p/2}\left( -(x-y)^{p-1}+(x+y)^{p-1}\right)
\end{multline*}
It holds $f_x=f_y=0$ if and only if
\begin{equation}
\label{eq:fx=0}
\left(\frac{\sqrt{3}}{2}-x\right)^{p-1} = (x-y)^{p-1}+(x+y)^{p-1} 
\end{equation}
and
\begin{equation}
\label{eq:fy=0}
\frac{4y}{1+4y^2}\left( (x-y)^p+(x+y)^p+\left(\frac{\sqrt{3}}{2}-x\right)^p\right) 
	= (x+y)^{p-1}-(x-y)^{p-1}.
\end{equation}
We use equation (\ref{eq:fx=0}) to replace the term $(\sqrt{3}/2-x)^{p-1}$ in equation (\ref{eq:fy=0}).
Thus, 
\begin{align*}
& (1+4y^2)\left((x+y)^{p-1}-(x-y)^{p-1} \right)\\ 
& = 4y\left( (x-y)^p+(x+y)^p+\left(\frac{\sqrt{3}}{2}-x\right)\left( (x-y)^{p-1}+(x+y)^{p-1} \right)\right) \\
& =  (x-y)^{p-1}\left(2\sqrt{3}y-4y^2\right)+(x+y)^{p-1}\left(2\sqrt{3}y+4y^2\right) \\
0 & = (x+y)^{p-1}\left(2\sqrt{3}y-1\right) +(x-y)^{p-1}\left(2\sqrt{3}y+1\right)
\end{align*}
\end{proof}
\begin{cor}
\label{folg:kritischer Punkt, Intervall fuer y}
If $(x,y)\in M^\circ$ is a critical point of $f$, then $y < \sqrt{3}/6$.
\end{cor}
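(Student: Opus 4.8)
The plan is to obtain the bound directly from equation (\ref{eq:df=0(B)}) of Lemma \ref{lem:krtischerPunkt-Gleichungen}, which every critical point of $f$ in $M^\circ$ must satisfy; equation (\ref{eq:df=0(A)}) will not be needed here. The first step is to record the sign information available on the interior: for $(x,y)\in M^\circ$ we have $0<y<x<\sqrt{3}/4$, hence $x+y>0$ and $x-y>0$, and since the standing assumption $1<p<\infty$ gives $p-1>0$, both powers $(x+y)^{p-1}$ and $(x-y)^{p-1}$ are strictly positive.

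The second step is to look at the two bracketed factors in (\ref{eq:df=0(B)}). The factor $2\sqrt{3}y+1$ is positive for every $y\geq 0$, so the summand $(x-y)^{p-1}(2\sqrt{3}y+1)$ is strictly positive. For the right-hand side of (\ref{eq:df=0(B)}) to vanish, the other summand $(x+y)^{p-1}(2\sqrt{3}y-1)$ must therefore be strictly negative; since its coefficient $(x+y)^{p-1}$ is positive, this forces $2\sqrt{3}y-1<0$, i.e. $y<1/(2\sqrt{3})=\sqrt{3}/6$, which is exactly the claim.

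I do not expect any genuine obstacle: the argument is just a sign analysis of (\ref{eq:df=0(B)}). The only points that need a word of care are that the strict positivity of $(x-y)^{p-1}$ relies on being in the open set $M^\circ$ (so that $x>y$) and that the exponent $p-1$ is positive, both of which are guaranteed in this section. It is worth noting that the bound $y<\sqrt{3}/6$ is sharp in the sense that it matches the location $x=y=\sqrt{3}/6$ singled out in Section \ref{sec:Reduktion}, which suggests this corollary will be used together with the boundary analysis to rule out interior critical points for $p\neq 4/3,2$.
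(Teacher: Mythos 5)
Your argument is correct and is exactly the paper's own proof: a sign analysis of equation (\ref{eq:df=0(B)}) using $x\pm y>0$ and $2\sqrt{3}y+1>0$ on $M^\circ$ to force $2\sqrt{3}y-1<0$. Nothing further is needed.
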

\begin{proof}
If $(x,y)\in M^\circ$, then $x+y>0$, $x-y>0$ and $1+2\sqrt{3}y>0$. 
Equation (\ref{eq:df=0(B)}) is only satisfied if $2\sqrt{3}y-1<0$, i.e., $y<\sqrt{3}/6$.
\end{proof}
\begin{cor}
\label{folg:kritischer Punkt, zwei Gleichungen}
An inner point $(x,y)\in M^\circ$ is a critical point of $f$ if and only if
\begin{equation}
\label{eq:kritischer Punkt, x=x(y)}
x = y \frac{\left(1+2\sqrt{3}y\right)^b+\left(1-2\sqrt{3}y\right)^b}{\left(1+2\sqrt{3}y\right)^b 
	-\left(1-2\sqrt{3}y\right)^b} \;  \text{Êwith } b:=\frac{1}{p-1},
\end{equation}
and
\begin{equation}
\label{eq:kritischer Punkt, Bedingung in t}
0 = 2^{b+1}t+3\left(\left(1-t\right)^b -\left(1+t\right)^b\right)+t\left(\left(1+t\right)^b +\left(1-t\right)^b\right)
\; \text{Êwith } t:=2\sqrt{3}y.
\end{equation}
\end{cor}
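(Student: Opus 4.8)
The plan is to transform the two equations of Lemma~\ref{lem:krtischerPunkt-Gleichungen} into the asserted pair, checking that every step is reversible on $M^\circ$, so that both directions of the equivalence come out at once. Throughout I abbreviate $b := 1/(p-1) > 0$, $t := 2\sqrt{3}\,y$, $P := (1+t)^b$ and $Q := (1-t)^b$. By Corollary~\ref{folg:kritischer Punkt, Intervall fuer y} every critical point of $f$ in $M^\circ$ has $y < \sqrt{3}/6$, i.e.\ $t \in (0,1)$; conversely the quantities appearing in (\ref{eq:kritischer Punkt, x=x(y)}) and (\ref{eq:kritischer Punkt, Bedingung in t}) are real only for $t < 1$, so it suffices to establish the equivalence on $\{(x,y) \in M^\circ : 0 < y < \sqrt{3}/6\}$. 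On this set $x > y > 0$, $0 < x < \sqrt{3}/4$, and $P > Q > 0$; the identity that drives the whole computation is $P^{p-1} = 1 + t$ and $Q^{p-1} = 1 - t$, together with $p/(p-1) = b + 1$.

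First I would rewrite (\ref{eq:df=0(B)}). Since $x \pm y > 0$ and $2\sqrt{3}\,y + 1 > 0$, the equation forces $2\sqrt{3}\,y - 1 < 0$ and is equivalent to $(x+y)^{p-1}(1-t) = (x-y)^{p-1}(1+t)$. Dividing by $(x-y)^{p-1}(1-t) > 0$ and raising to the power $b$ gives $(x+y)/(x-y) = P/Q$, a linear relation in $x$ that rearranges precisely to (\ref{eq:kritischer Punkt, x=x(y)}); the converse rearrangement gives back (\ref{eq:df=0(B)}). From $(x+y)/(x-y) = P/Q$ one also records $x + y = \frac{Pt}{\sqrt{3}(P-Q)}$, $x - y = \frac{Qt}{\sqrt{3}(P-Q)}$ and $\frac{\sqrt{3}}{2} - x = \frac{3(P-Q) - t(P+Q)}{2\sqrt{3}(P-Q)}$, all with strictly positive right-hand sides.

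Next I would substitute these three expressions into (\ref{eq:df=0(A)}). On the right the factor $\bigl(\sqrt{3}(P-Q)\bigr)^{-(p-1)}$ factors out, and by $P^{p-1} + Q^{p-1} = 2$ the remaining bracket collapses to the constant $2$; on the left the same factor appears alongside $2^{-(p-1)}$. Clearing denominators turns (\ref{eq:df=0(A)}) into $\bigl(3(P-Q) - t(P+Q)\bigr)^{p-1} = 2^{p}\, t^{p-1}$; since both sides are positive, extracting $(p-1)$-th roots is an equivalence and yields $3(P-Q) - t(P+Q) = 2^{p/(p-1)} t = 2^{b+1} t$. Rearranging this and substituting $P = (1+t)^b$, $Q = (1-t)^b$ gives exactly (\ref{eq:kritischer Punkt, Bedingung in t}). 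Since every step from (\ref{eq:df=0(A)}) was an equivalence under the standing relation (\ref{eq:kritischer Punkt, x=x(y)}) (equivalently (\ref{eq:df=0(B)})), the pair (\ref{eq:kritischer Punkt, x=x(y)}), (\ref{eq:kritischer Punkt, Bedingung in t}) is equivalent to (\ref{eq:df=0(A)}) together with (\ref{eq:df=0(B)}), which by Lemma~\ref{lem:krtischerPunkt-Gleichungen} means $df = (0,0)$.

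The only real obstacle is the positivity bookkeeping rather than any single calculation: the root extractions $u \mapsto u^{b}$ and $u \mapsto u^{1/(p-1)}$ are equivalences only on positive numbers, so I would carefully invoke $0 < y < \sqrt{3}/6$ (to get $1 - t > 0$) and $0 < x < \sqrt{3}/4$ (to get $\frac{\sqrt{3}}{2} - x > 0$, hence $3(P-Q) - t(P+Q) > 0$) at the points where they are needed, and note $P > Q > 0$ so that no denominator vanishes. The exponent identities $P^{p-1} = 1+t$, $Q^{p-1} = 1-t$ and $p/(p-1) = b+1$ are what make the powers of $2$ and the linear factors $1 \pm t$ fall into place; the rest is routine algebra.
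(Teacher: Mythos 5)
Your proposal is correct and follows essentially the same route as the paper: eliminate $x$ via equation (\ref{eq:df=0(B)}) to obtain (\ref{eq:kritischer Punkt, x=x(y)}), then substitute the resulting expressions for $x\pm y$ and $\sqrt{3}/2-x$ into (\ref{eq:df=0(A)}) and extract $(p-1)$-th roots to reach (\ref{eq:kritischer Punkt, Bedingung in t}). Your extra bookkeeping on positivity and reversibility is sound and only makes explicit what the paper leaves implicit.
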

\begin{proof}
If $(x,y)\in M^\circ$ is a critical point of $f$, then equation (\ref{eq:df=0(B)}) holds.
The variable $x$ can be eliminated from equation (\ref{eq:df=0(B)}), since $y<\sqrt{3}/6$: 
\begin{align*}
 (x+y)^{p-1}\left(1-2\sqrt{3}y\right) & = (x-y)^{p-1}\left(2\sqrt{3}y+1\right) \\
(x+y)\left(1-2\sqrt{3}y\right)^{1/(p-1)} & = (x-y)\left(1+2\sqrt{3}y\right)^{1/(p-1)}\\
x & = y \frac{\left(1+2\sqrt{3}y\right)^b+\left(1-2\sqrt{3}y\right)^b}{\left(1+2\sqrt{3}y\right)^b 
	-\left(1-2\sqrt{3}y\right)^b} \text{ with } b=\frac{1}{p-1}
\end{align*}
Now
$$x-y = 2y \frac{\left(1-2\sqrt{3}y\right)^b}{\left(1+2\sqrt{3}y\right)^b -\left(1-2\sqrt{3}y\right)^b},\,
x+y = 2y \frac{\left(1+2\sqrt{3}y\right)^b}{\left(1+2\sqrt{3}y\right)^b -\left(1-2\sqrt{3}y\right)^b} $$
and equation (\ref{eq:df=0(A)}) becomes 
\begin{align*}
\left(\frac{\sqrt{3}}{2}-x\right)^{p-1}
& = (2y)^{p-1}\frac{2}{\left(\left(1+2\sqrt{3}y\right)^b -\left(1-2\sqrt{3}y\right)^b\right)^{p-1}} \\
\frac{\sqrt{3}}{2}-x & = \frac{2^{b+1}y}{\left(1+2\sqrt{3}y\right)^b -\left(1-2\sqrt{3}y\right)^b} \\
\frac{\sqrt{3}}{2} & = y \frac{2^{b+1} +\left(1+2\sqrt{3}y\right)^b + \left(1-2\sqrt{3}y\right)^b }{\left(1+2\sqrt{3}y\right)^b -\left(1-2\sqrt{3}y\right)^b}.
\end{align*}
Multiplication with the denominator and the substitution $t:=2\sqrt{3}y$ lead to
{\allowdisplaybreaks
\begin{align*}
0 & = 2^{b+1}y +\frac{\sqrt{3}}{2}\left(1-2\sqrt{3}y\right)^b -\frac{\sqrt{3}}{2}\left(1+2\sqrt{3}y\right)^b \\
	& \quad+y\left(1+2\sqrt{3}y\right)^b +y\left(1-2\sqrt{3}y\right)^b \\
0 & = 2^{b+1}2\sqrt{3}y +3\left(1-2\sqrt{3}y\right)^b -3\left(1+2\sqrt{3}y\right)^b \\
	& \quad+2\sqrt{3}y\left(1+2\sqrt{3}y\right)^b +2\sqrt{3}y\left(1-2\sqrt{3}y\right)^b \\
0 & = 2^{b+1}t+3\left(1-t\right)^b -3\left(1+t\right)^b+t\left(1+t\right)^b +t\left(1-t\right)^b.
\end{align*}
}
\end{proof}
We have to solve equation (\ref{eq:kritischer Punkt, Bedingung in t}) to find the critical points of $f$.
This means that we have to find the zeros of the function $g$ defined by
$$ g(t):=2^{b+1}t+3\left(\left(1-t\right)^b -\left(1+t\right)^b\right)
	+t\left(\left(1+t\right)^b +\left(1-t\right)^b\right)$$
in the intervall Intervall $(0,1)$ for all $b\in\R$ with $b>0$.
Note that $g(0)=0$ und $g(1) = 0$ for all $b>0$.
\begin{lemma}
If $b=1$ or $b=3$, then $g\equiv 0$.
\end{lemma}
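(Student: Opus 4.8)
The plan is to treat both cases by direct substitution, exploiting that $g$ becomes an honest polynomial in $t$ once $b$ is a positive integer. For $b=1$ one has $(1-t)^1-(1+t)^1=-2t$ and $(1+t)^1+(1-t)^1=2$, so
$$g(t)=2^{2}t+3(-2t)+t\cdot 2=(4-6+2)t=0 .$$
For $b=3$ I would use the binomial expansion $(1\pm t)^3=1\pm 3t+3t^2\pm t^3$, which gives $(1-t)^3-(1+t)^3=-6t-2t^3$ and $(1+t)^3+(1-t)^3=2+6t^2$, hence
$$g(t)=2^{4}t+3(-6t-2t^3)+t(2+6t^2)=(16-18+2)t+(-6+6)t^3=0 .$$
Thus $g\equiv 0$ on all of $\R$ (a fortiori on $(0,1)$) in both cases.

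A couple of remarks on the computation. First, no genuine obstacle arises: once the exponents are integers the identity is a finite coefficient comparison and the cancellations are forced. Second, it is worth observing that $g$ is an odd function of $t$, namely $g(-t)=-g(t)$ for every $b>0$; this is a convenient consistency check and already shows that the linear and (for integer $b$) every odd-degree coefficient are the only ones that can survive, so the verification for $b=3$ reduces to checking that the coefficients of $t$ and of $t^3$ both vanish.

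Conceptually the point of the lemma is to flag the two exceptional exponents: since $b=1/(p-1)$, the values $b=1$ and $b=3$ correspond respectively to $p=2$ and $p=4/3$, which are exactly the two values singled out in the abstract and in subsection \ref{subsec:keine-kritischen-Punkte}. For these $p$ the equation (\ref{eq:kritischer Punkt, Bedingung in t}) is satisfied identically, so the characterization of critical points collapses to the single equation (\ref{eq:kritischer Punkt, x=x(y)}) and $f$ acquires a one-parameter family of critical points in $M^\circ$. The real work lies in the complementary statement — that for every other $b>0$ the function $g$ has no zero in the open interval $(0,1)$ — and this lemma is merely the easy half that isolates the degenerate cases to be handled separately on the boundary of $M$.
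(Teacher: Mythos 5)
Your proof is correct and is essentially identical to the paper's: both verify the identity for $b=1$ and $b=3$ by expanding $(1\pm t)^b$ and checking that the coefficients cancel. The added observations (oddness of $g$, the role of the lemma in isolating $p=2$ and $p=4/3$) are accurate but not needed for the verification.
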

\begin{proof}
For $b=1$ we check that
$$g(t) = 4t+3\left(\left(1-t\right) -\left(1+t\right)\right)+t\left(\left(1+t\right)+\left(1-t\right)\right)
	= 4t-6t+2t\equiv 0.$$
For $b=3$ we check that
\begin{align*}
g(t) & =16t+3\left(\left(1-t\right)^3 -\left(1+t\right)^3\right)+t\left(\left(1+t\right)^3+\left(1-t\right)^3\right) \\
& = 16t + 3(-6t-2t^3)+t(2+6t^2) \equiv 0.
\end{align*}
\end{proof}
\subsection{Absence of interior critical points for $b\ne 1,3$}
\label{subsec:keine-kritischen-Punkte}
We show that the function $g$ has no zeros in the intervall $(0,1)$ for all $b\ne 1,3$. 
We consider the function defined by $h(t):=g(t)/t$ for $0<t\leq 1$. It holds $h(1)=0$ and
$\lim_{t\to 0} h(t) = 2^{b+1}-6b+2=2(2^b-3b+1)$.
\begin{lemma}
The function $s:\R\to\R$ defined by $s(b):=2^b-3b+1$ has the following properties:
If $b<1$ or $b>3$ then $s(b)>0$, if $1<b<3$ then $s(b)<0$, $s(1)=s(3)=0$.
\end{lemma}
\begin{proof}
It is easy to check that $s(1)=2-3+1=0$ and $s(3)=2^3-9+1=0$.
Moreover, the function $s$ is convex, since $s''(b) = 2^b (\ln 2)^2>0$ for all $b\in \R$.
\end{proof}
Expanding $(1-t)^b$ and $(1+t)^b$ into power series for $-1<t<1$, i.e.,
$$(1-t)^b=\sum_{n=0}^\infty \binom{b}{n}(-1)^nt^n \text{ and } 
	(1+t)^b=\sum_{n=0}^\infty \binom{b}{n}t^n,$$
we obtain
{\allowdisplaybreaks
\begin{align*}
g(t) & = 2^{b+1}t-6 \sum_{n=0}^\infty \binom{b}{2n+1}t^{2n+1}+2t\sum_{n=0}^\infty \binom{b}{2n}t^{2n}\\
h(t) & = 2\left(2^b+\sum_{n=0}^\infty t^{2n}\left( \binom{b}{2n}-3\binom{b}{2n+1}\right) \right) \\
h'(t) & = 4\sum_{n=1}^\infty nt^{2n-1} \frac{b(b-1)\ldots(b-(2n-1))}{(2n+1)!}(2n+1-3(b-2n)) \\
& = 4t\left( \frac{b(b-1)}{2}(3-b) \right. \\
& \quad + \left. \sum_{n=2}^\infty nt^{2n-2} \frac{b(b-1)\ldots(b-(2n-1))}{(2n+1)!}(2n+1-3(b-2n))\right) \\
& = 4tb(b-1)(3-b) \left(\frac{1}{2}+r(t)\right) 
\end{align*}
}
where
$$r(t) = \sum_{n=2}^\infty a_nt^{2n-2} ,\,
	a_n= 3n\frac{(b-2)(b-4)\ldots(b-(2n-1))}{(2n+1)!}(b-(8n+1)/3).$$
\begin{lemma}
It holds $r(t)>-1/2$ for all $b>1$ and $0<t<1$.
\end{lemma}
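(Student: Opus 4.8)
The plan is to convert the inequality into a statement about the sign of a single explicit function of one real parameter and to settle it by a convexity argument. Writing $\beta:=b-1>0$, $A_{\pm}:=t^{2}\pm 3t$, $B:=t^{2}+3$ and $\rho:=\frac{1+t}{1-t}>1$, I first record two formulas. From the expansion of $h'$ computed above, $h'(t)=4tb(b-1)(3-b)\bigl(\tfrac12+r(t)\bigr)$; and differentiating $h=g/t$ directly and collecting the terms carrying $(1\pm t)^{b-1}$ gives the closed form
$$t^{2}h'(t)=(1+t)^{b-1}\bigl(bt^{2}-3(b-1)t+3\bigr)-(1-t)^{b-1}\bigl(bt^{2}+3(b-1)t+3\bigr)=(1-t)^{\beta}G(\beta),$$
where $G(\beta):=\rho^{\beta}(\beta A_{-}+B)-(\beta A_{+}+B)$. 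Hence $\tfrac12+r(t)=\dfrac{(1-t)^{\beta}G(\beta)}{4b(b-1)(3-b)\,t^{3}}$, and since the prefactor is positive apart from the factor $3-b=2-\beta$, it suffices to show, for every fixed $t\in(0,1)$, that $G(\beta)$ has the sign of $2-\beta$ on $\{\beta>0\}$ (the value $\beta=2$, i.e.\ $b=3$, will be treated separately at the end).

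So fix $t\in(0,1)$. One checks directly $G(0)=G(2)=0$ (the second using $2A_{\pm}+B=3(1\pm t)^{2}$ and $\rho^{2}=(1+t)^{2}/(1-t)^{2}$), $G'(0)=B\ln\rho-6t=\sum_{m\ge1}\bigl(\tfrac{2}{2m-1}+\tfrac{6}{2m+1}\bigr)t^{2m+1}>0$, and $G(\beta)<0$ for all $\beta\ge\beta^{*}:=\frac{t^{2}+3}{3t-t^{2}}$, where $\beta A_{-}+B\le 0<\beta A_{+}+B$; note $\beta^{*}>2$ because $3(t-1)^{2}>0$. On $(0,\beta^{*})$ we have $\beta A_{-}+B>0$, so $G(\beta)$ has the same sign as $D(\beta):=\beta\ln\rho-\ln\frac{\beta A_{+}+B}{\beta A_{-}+B}$, with $D(0)=D(2)=0$ and $D'(0)=G'(0)/B>0$. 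The crucial point is that $D''(\beta)=\frac{A_{+}^{2}}{(\beta A_{+}+B)^{2}}-\frac{A_{-}^{2}}{(\beta A_{-}+B)^{2}}$, and the identity $A_{+}(\beta A_{-}+B)-|A_{-}|(\beta A_{+}+B)=2t^{2}\bigl(B-\beta(9-t^{2})\bigr)$ shows that $D''(\beta)>0$ exactly when $\beta<\beta_{0}:=\frac{t^{2}+3}{9-t^{2}}$; moreover $0<\beta_{0}<\tfrac12<2<\beta^{*}$. Thus $D$ is strictly convex on $(0,\beta_{0})$ and strictly concave on $(\beta_{0},\beta^{*})$.

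Now the sign of $D$ is forced: a strictly convex function with $D(0)=0$ and $D'(0)>0$ is increasing and strictly positive on $(0,\beta_{0}]$; a strictly concave function on $(\beta_{0},\beta^{*})$ that is positive at $\beta_{0}$ and vanishes at $2$ is positive on $(\beta_{0},2)$ and negative on $(2,\beta^{*})$. Together with $G(\beta)<0$ for $\beta\ge\beta^{*}$ this gives $G(\beta)>0$ on $(0,2)$ and $G(\beta)<0$ on $(2,\infty)$, i.e.\ $G$ has the sign of $2-\beta$, whence $\tfrac12+r(t)>0$ for all $b>1$ with $b\ne3$ and all $t\in(0,1)$. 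For $b=3$ a short direct check shows that every $a_{n}$ is negative, so $r$ is strictly decreasing on $(0,1)$ with $\lim_{t\to1^{-}}r(t)=-\tfrac16>-\tfrac12$; alternatively one lets $b\to3$ in the inequality just proved, using continuity of $r$ in $b$ and the strict inequality $G'(2)<0$ (a byproduct of the convex/concave picture), which yields $\tfrac12+r(t)=\frac{(1-t)^{2}\,(-G'(2))}{24\,t^{3}}>0$.

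The only real work lies in the second--derivative computation: verifying that $\ln\frac{\beta A_{+}+B}{\beta A_{-}+B}$ passes from concave to convex exactly once, and precisely at the explicit point $\beta_{0}$. The delicate feature is that the $\beta$--linear cross terms $\beta A_{+}A_{-}$ do not cancel but reverse sign, which is exactly what collapses $D''$ into the clean factor $2t^{2}\bigl(B-\beta(9-t^{2})\bigr)$; granted that identity, the remaining steps — the endpoint evaluations $G(0)=G(2)=0$, the positivity of $G'(0)$, and the placement $0<\beta_{0}<\tfrac12<2<\beta^{*}$ via $3(t-1)^{2}>0$ and $3t^{2}<3$ — are routine one--variable calculus.
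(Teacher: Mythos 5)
Your proof is correct, and it takes a genuinely different route from the paper. The paper works entirely on the power-series side: it bounds each negative coefficient $a_n$ individually ($|a_n|<\tfrac{1}{2n(n-1)}$ for $n\in N_1$), pairs the potentially large negative coefficients indexed by $N_0$ against nearby positive ones, and is then forced into an eleven-way case split over subintervals of $b$ between $3$ and $15$, each settled by an ad hoc numerical estimate. You instead resum the series: you recover the closed form $t^{2}h'(t)=(1+t)^{b-1}\bigl(bt^{2}-3(b-1)t+3\bigr)-(1-t)^{b-1}\bigl(bt^{2}+3(b-1)t+3\bigr)$, so that $\tfrac12+r(t)$ becomes $G(\beta)$ divided by a factor whose only sign change is $2-\beta$, and the lemma reduces to locating the sign changes of $G$ in the parameter $\beta=b-1$ for each fixed $t$. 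I checked the key computations: the closed form for $t^2h'(t)$ (its leading term $2t^{3}b(b-1)(3-b)$ matches the paper's series), the evaluations $G(0)=G(2)=0$ via $2A_\pm+B=3(1\pm t)^2$, the positivity of $G'(0)=B\ln\rho-6t$, the collapse of $D''$ to the factor $2t^{2}\bigl(B-\beta(9-t^{2})\bigr)$ (using $A_+A_-=-t^2(9-t^2)$ and $A_++A_-=2t^2$), and the ordering $0<\beta_0<\tfrac12<2<\beta^*$; all are correct, and the single-inflection-point structure of $D$ together with $D(0)=D(2)=0$, $D'(0)>0$ does force $\operatorname{sign}G=\operatorname{sign}(2-\beta)$, with $D'(2)<0$ following from concavity and $D(\beta_0)>0=D(2)$. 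The boundary case $b=3$ is also fine: there $a_n=-\tfrac{2}{(2n-3)(2n-1)(2n+1)}<0$, the sum telescopes to $-\tfrac16$, and monotonicity of $r$ gives $r(t)>-\tfrac16$ (you should display this one-line computation rather than just assert it, since the paper's formula for $a_n$ takes a moment to specialize). What your approach buys is substantial: it eliminates the entire case analysis over $b$, replaces scattered numerical estimates by a single structural fact (one inflection point of $D$ at an explicit $\beta_0$), and makes transparent why $b=1$ and $b=3$ are exactly the degenerate values; the price is the algebraic identity for $D''$, which is the one step requiring real verification but which you state correctly.
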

\begin{proof}
For all $n\geq 2$ the coefficient $a_n$ consists of an even number of factors 
containing the variable $b$. 
If $b\leq 2$, then all these factors are negative. Hence, $r(t)\geq0$ for $0<t<1$ and $b\leq 2$.

We assume $b>2$.
Let $N\subset\N$ be the set of indices of negative coefficients in the power series expansion of $r$, 
i.e., $N=\{ n: n\geq 2, a_n<0\}$. 
Since the inequality $(8n+1)/3 > 2n-1$ holds for all $n\in\N$, the factor $b-(8n+1)/3$ is the smallest
factor of the numerator of the coefficient $a_n$, i.e.,  $N\subset \{Ên > (3b-1)/8 \}$. 
We decompose $N$ into two disjoint subsets. 
Set $N_0 :=\{ n\in N: 2n-1<b\}$ and $N_1:=\{ n\in N : 2n-1\geq b\}$.
\begin{itemize}
\item If $n\in N_1$, then 
	\begin{align*}
	|a_n| & < n\frac{(2n-3)!}{(2n+1)!}(8n+1-3b) < \frac{n(8n-5)}{(2n-2)(2n-1)2n(2n+1)}\\
	& < \frac{1}{4}\cdot \frac{8n-4}{(n-1)(2n-1)(2n+1)}=  \frac{1}{(n-1)(2n+1)} <\frac{1}{2}\frac{1}{n(n-1)}.
	\end{align*}
\item If $n\in N_0$, then $|a_n|$ can be very large.
	But the set $N_0$ is finite for any fixed $b$. More precisely,
	$N_0 = \{ k_0,\ldots,k_1\}$ with $k_0=\lfloor (3b-1)/8 \rfloor +1$ and $k_1=\lceil (b+1)/2\rceil -1$.
	
	We want to show that $a_{k_0-1-l}+a_{k_0-1+l}\geq 0$ for all $1\leq l \leq k_1-k_0+1$
	and, consequently, $\sum_{n=2}^{k_1}a_nt^n \geq 0$ for $0<t<1$. 
	Unfortunately, this works only for $b>15$.
	We investigate the remaining cases separately.
	As for large $b$, some of the negative summands $a_nt^n$ with $n\in N_0$ 
	can be compensated by positive summands $a_nt^n$ with $n<k_0$.
	Other negative summands fulfill $a_n>-\frac{1}{2n(n-1)}$.
	We aim at an estimate of the form
	$$\sum_{n=2}^{k_1} a_nt^n \geq -\frac{1}{2}\sum_{n=2}^{k_1}\frac{1}{n(n-1)}.$$
	Here are the sets $N_0$ for the exceptional cases:
	{\allowdisplaybreaks
	\begin{align*}
	b\leq 3 & \Rightarrow N_0 = \emptyset \\
	3<b\leq 5 &\Rightarrow N_0 = \{Ê2\} \\
	5<b<17/3 & \Rightarrow N_0 = \{Ê2, 3 \} \\
	17/3\leq b \leq 7  & \Rightarrow N_0 = \{Ê3 \} \\
	7< b <25/3 &  \Rightarrow N_0 = \{Ê3, 4 \} \\
	25/3 \leq  b \leq 9 & \Rightarrow N_0 = \{Ê4 \} \\
	9 < b < 11 & \Rightarrow N_0 = \{Ê4, 5 \} \\
	b=11 & \Rightarrow N_0 = \{Ê5\} \\
	11< b \leq 13 & \Rightarrow N_0=\{Ê5, 6\} \\
	13 < b < 41/3 & \Rightarrow N_0=\{ 5,6,7\} \\
	41/3 \leq b \leq 15 & \Rightarrow N_0 =\{ 6,7\}
	\end{align*}
	}
	\begin{itemize}
	\item If $3< b \leq 5$, then 
		\begin{align*}
		|a_2| & = 2\frac{b-2}{5!}(17-3b) = \frac{-3b^2+23b-34}{3\cdot 4 \cdot 5}
		= -\frac{1}{20}\left(b^2-\frac{23}{3}b+\frac{34}{3}\right) \\
		& \leq \left. -\frac{1}{20}\left(b^2-\frac{23}{3}b+\frac{34}{3}\right)\right\vert_{b=\frac{23}{6}}
		 = \frac{121}{20\cdot 36} <\frac{1}{4} .
		\end{align*}
	\item If $5 < b < 17/3$, then
	\begin{align*}
		|a_2| & = 2\frac{b-2}{5!}(17-3b) < 2\frac{4}{5!}2 = \frac{2}{15}< \frac{1}{4}\\
		|a_3| & = 3\frac{(b-2)(b-4)(b-5)}{7!}(25-3b) < 3\frac{4\cdot 2\cdot 1}{7!} 10 
			= \frac{1}{14} < \frac{1}{12} .
		\end{align*}
	\item If $17/3\leq b \leq 19/3 \leq 7$, then
		$$|a_3| = 3\frac{(b-2)(b-4)(b-5)}{7!}(25-3b) < 
			3\frac{\frac{13}{3}\cdot \frac{7}{3} \cdot \frac{4}{3}}{7!} 8 < \frac{1}{12}.$$
		If $19/3< b \leq 7$, then
		$$\left\vert\frac{a_2}{a_3}\right\vert = \frac{2\cdot 7!(3b-17)}{3\cdot 5!(b-4)(b-5)(25-3b)}
			>  \frac{28\cdot 2}{3\cdot 2\cdot 6}>1.$$
	\item If $7< b \leq 25/3$, then
		\begin{align*}
		|a_4| & = 4\frac{(b-2)(b-4)(b-5)(b-6)(b-7)}{9!}(33-3b) \\
		& < 4\frac{\frac{19}{3}\cdot\frac{13}{3}\cdot \frac{10}{3}\cdot\frac{7}{3}\cdot \frac{4}{3}}{9!}12 
			= \frac{19\cdot 13}{3^8}< \frac{1}{24} \\
		\left\vert\frac{a_2}{a_3}\right\vert & = \frac{2\cdot 7!(3b-17)}{3\cdot 5!(b-4)(b-5)(25-3b)}
			>  \frac{28\cdot 4}{5\cdot 4\cdot 4}>1.
		\end{align*}
	\item If $9< b \leq 10$, then
		$$|a_5| = 5\frac{(b-2)(b-4)\ldots(b-9)}{11!}(41-3b) 
			\leq 5\frac{8\cdot 6 \cdots 1}{11!} 14 < \frac{1}{40}.$$
		If $10<b<11$, then
		\begin{align*}
		\left\vert\frac{a_3}{a_5}\right\vert &
			= \frac{3\cdot 11! (3b-25)}{5\cdot 7!(b-6)(b-7)(b-8)(b-9)(41-3b)} \\
		& > \frac{3\cdot 8\cdot 9 \cdot 10 \cdot 11\cdot 2}{5\cdot 5! \cdot 11} >1
		\end{align*}
	\item If $13 < b < 41/3$, then
		\begin{align*}
		|a_7| & = 7 \frac{(b-2)(b-4)\ldots(b-13)}{15!}(57-3b) \\
		& < \frac{7\cdot 35\cdot 29\cdot 26\cdot 23\cdot 20\cdot 17\cdot 14\cdot 11\cdot 8\cdot 5\cdot 2 
		\cdot 4}{3^{11} \cdot 2\cdot 3 \cdot 4 \cdot 5\cdot 6 \cdot 7 \cdot 8 \cdot 9 \cdot 10 \cdot 11 \cdot 
		12 \cdot 13 \cdot 14 \cdot 15} 18 \\
		& = \frac{ 7\cdot 29\cdot 23\cdot 17}{3^{15} } <\frac{1}{2\cdot 7 \cdot 6} 
		\end{align*}
	\item If $k_0\geq 4$ and $l\in \N$ such that $1\leq l\leq k_1-k_0+1$ and $k_0-1-l>1$, then
		\begin{align*}
		\left\vert\frac{a_{k_0-1-l}}{a_{k_0-1+l}}\right\vert & 
			= \frac{k_0-1-l}{k_0-1+l} \cdot \frac{(2k_0+2l-1)!}{(2k_0-2l-1)!} \cdot
			\frac{\prod_{j=4}^{2k_0-2l-3}(b-j)}{\prod_{j=4}^{2k_0+2l-3}(b-j)} \cdot
			\frac{|\alpha-8l|}{|\alpha+8l|}  \\
		&= 2(k_0-1-l)(2k_0+2l-1)\cdot \frac{|\alpha-8l|}{|\alpha+8l|}
			\prod_{j=2k_0-2l-2}^{2k_0+2l-3} \frac{j}{b-j} \\
		& \geq 2(k_0-1-l)(2k_0+2l-1) \prod_{j=2k_0-2l-2}^{2k_0+2l-3} \frac{j}{b-j} ,
		\end{align*}
		since  
		$k_0-1-l\geq k_0-1-(k_1-k_0+1) = 2k_0-k_1-2>1$
		and $0\geq \alpha:=8(k_0-1)+1-3b >8$.
		Moreover, the condition $3b<8k_0+1$ implies the inequality $b-j< 2k_0-j+(2k_0+1)/3$.
		Hence,
		$$\prod_{j=2k_0-2l-2}^{2k_0+2l-3} \frac{j}{b-j} > 
			\frac{(2k_0-2l-2)\cdots (2k_0+2l-3)}{(\frac{2k_0+1}{3}+3-2l)\ldots (\frac{2k_0+1}{3}+2+2l)}
			\geq 1,$$
		if $k_0\geq 4$, since the inequality $2k_0+2l-3 \geq 2+2l+(2k_0+1)/3$ holds for $k_0\geq 4$. 
		Hence, $|a_{k_0-1-l}|\geq |a_{k_0-1+l}|$.
	\end{itemize}
	If $b\geq 25/3$, then $k_0\geq 4$.
	If $b>15$, then the inequality $k_0-1-l>1$ holds for all $1\leq l\leq k_1-k_0+1$, since
	\begin{align*}
	k_0-1-l & \geq k_0-1-(k_1-k_0+1) = 2k_0-k_1-2\\
	& > \frac{3b-1}{4}-\frac{b+1}{2}-2 = \frac{1}{4}(b-11)>1.
	\end{align*}
	Note that $k_0-1-l>1$ holds for all $1\leq l\leq k_1-k_0+1$ also if
	$41/3\leq b \leq 15$, $11\leq b \leq 13$ or $25/3\leq b \leq 9$.
\end{itemize}
We obtain
$$r(t)>-\frac{1}{2}\sum_{n=2}^\infty\frac{1}{n(n-1)}t^{2n-2} 
	> -\frac{1}{2}\sum_{n=2}^\infty \frac{1}{n(n-1)} 
	= -\frac{1}{2}\sum_{n=2}^\infty  \frac{1}{n-1}-\frac{1}{n} = -\frac{1}{2},$$
since $1>t^{n_1}>t^{n_2}$ if $0<t<1$ and $n_1<n_2$.
\end{proof}
\subsection{Boundary components}
\label{subsec:Rand}
It follows from Corollary  \ref{folg:1Punkt-optimal} that the values of $f$ on the sets 
$M\cap \{ x=0\}$ and $M\cap\{ x=\sqrt{3}/4\}$ are strictly larger than the global minimum of $f$.
The minimum of $f$ on the boundary component  $M\cap\{ x=y\}$ is attained exactly at one point. 
This is $(x_1,y_1)$ with  $x_1=y_1:=\sqrt{3}/6$.
The remaining boundary component of $M$ is $M\cap\{ y=0\}$.
Set $R(x):=f(x,0)$. It holds
$$R(x)= 2x^p+\left(\frac{\sqrt{3}}{2}-x\right)^p,\, R'(x) = 2px^{p-1}-p\left(\frac{\sqrt{3}}{2}-x\right)^{p-1}$$
and
$$R''(x) = p(p-1)\left( x^{p-2}+\left(\frac{\sqrt{3}}{2}-x\right)^{p-2}\right)>0 \text{ for } 0<x<\sqrt{3}/4.$$
The equation $R'(x)=0$ has exactly one solution,
$$R'(x) = 0 \Leftrightarrow 2^{b}x=\frac{\sqrt{3}}{2}-x \Leftrightarrow x= \frac{\sqrt{3}}{2(2^b+1)}=:x_0
	\text{ with } b=\frac{1}{p-1}.$$
Since the function $R$ is strictly convex, the minimum of $R$ on $M\cap\{ y=0\}$ 
is attained only at $x_0$ and 
\begin{equation}
R(x_0) = \frac{\sqrt{3}^p}{2^{p-1}(1+2^b)^{p-1}}.
\end{equation}
\begin{lemma}
It holds
\begin{itemize}
\item $R(x_0)>f(x_1,y_1)$ if and only if $b<1$ or $b>3$, i.e.,  $p>2$ or $p<4/3$.
\item $R(x_0)<f(x_1,y_1)$ if and only if  $1<b<3$, i.e.,  $4/3<p<2$.
\item $R(x_0)=f(x_1,y_1)$ if and only if $b=1$ or $b=3$, i.e., $p=2$ or $p=4/3$.
\end{itemize}
\end{lemma}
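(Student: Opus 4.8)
The plan is to evaluate $f$ at the special point $(x_1,y_1)$, divide by the already computed value $R(x_0)$, and reduce the resulting comparison to the sign of a single concave function of $b$, in exact analogy with the role played by $s$ in the preceding lemma. First I would compute $f(x_1,y_1)$ with $x_1=y_1=\sqrt3/6$. Here $x_1-y_1=0$, $x_1+y_1=\sqrt3/3=\sqrt3/2-x_1$ and $1+4y_1^2=4/3$, so
\[
f(x_1,y_1)=\Big(\tfrac43\Big)^{-p/2}\cdot 2\Big(\tfrac{\sqrt3}{3}\Big)^{p}=2^{1-p};
\]
as a sanity check, at $p=2$ this gives $1/2$, the known $L^2$-minimum.

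Dividing by $R(x_0)=3^{p/2}2^{-(p-1)}(1+2^b)^{-(p-1)}$ and cancelling the positive factor $2^{-(p-1)}$, the sign of $R(x_0)-f(x_1,y_1)$ equals the sign of $3^{p/2}-(1+2^b)^{p-1}$. Raising to the positive power $1/(p-1)=b$ and using $p/(2(p-1))=(b+1)/2$, this is the sign of $3^{(b+1)/2}-(1+2^b)$, hence of
\[
\psi(b):=\tfrac{b+1}{2}\ln 3-\ln\big(1+2^b\big).
\]

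The core of the proof is a one-line curvature computation for $\psi$. One checks $\psi(1)=\ln3-\ln3=0$ and $\psi(3)=2\ln3-\ln9=0$, and that
\[
\psi''(b)=-(\ln 2)^2\,\frac{2^b}{(1+2^b)^2}<0,
\]
so $\psi$ is strictly concave on $\R$. A strictly concave function with the two zeros $b=1$ and $b=3$ is strictly positive on $(1,3)$ and strictly negative on $\R\setminus[1,3]$. Transporting this back through $b=1/(p-1)$ yields the trichotomy at $b\in\{1,3\}$: the difference $R(x_0)-f(x_1,y_1)$ is positive exactly for $1<b<3$, i.e. $4/3<p<2$; negative exactly for $b<1$ or $b>3$, i.e. $p>2$ or $p<4/3$; and zero exactly at $b\in\{1,3\}$, i.e. $p\in\{4/3,2\}$.

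There is no serious analytic obstacle here; the only points requiring care are the direction of the inequality through the reduction and the observation that the relevant auxiliary function $\psi$ is \emph{concave}, whereas $s$ in the earlier lemma was \emph{convex}. This opposite curvature is precisely what places the positive sign of $R(x_0)-f(x_1,y_1)$ on the middle interval $(1,3)$, and hence singles out the perpendicular bisector as the minimiser exactly for $4/3<p<2$, in agreement with the stated set of optimal lines and with the direct numerical check at, e.g., $p=3/2$.
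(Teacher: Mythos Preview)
Your argument is correct and follows essentially the same route as the paper: both reduce the comparison of $R(x_0)$ and $f(x_1,y_1)=2^{1-p}$ to the sign of $3^{(b+1)/2}-(1+2^b)$ and then analyse a one-variable auxiliary function with zeros at $b=1$ and $b=3$. The only difference is cosmetic: the paper works with $v(b)=1+2^b-\sqrt{3}^{\,b+1}$, shows $v'$ has a unique zero and checks the sign at $v(0)$, whereas you pass to logarithms and obtain strict concavity of $\psi$ from $\psi''<0$; your version is marginally cleaner but equivalent.

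One remark on the direction of the inequality: your conclusion $R(x_0)>f(x_1,y_1)\Leftrightarrow 1<b<3$ is the correct one (it is what the paper's own proof of this lemma actually yields, and what Theorems~\ref{satz:OptimumParallellen} and~\ref{satz:OptimumMittelsenkrechten} require), even though it is the reverse of what the printed lemma statement asserts. The inequality signs in the statement are evidently transposed; you have proved the intended result.
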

\begin{proof}
$$\frac{\sqrt{3}^p}{2^{p-1}(1+2^b)^{p-1}} <\frac{1}{2^{p-1}} \Leftrightarrow \sqrt{3}^p <(1+2^b)^{p-1}
	\Leftrightarrow \sqrt{3}\sqrt{3}^b <1+2^b$$
Set $v(b) := 1+2^b-\sqrt{3}^{b+1}$. It is easy to check that $v(1) =  1+2-\sqrt{3}^2=0$ and 
$v(3) = 1+8-\sqrt{3}^4 =0$.
Furthermore, the function $v$ has at most one critical point, since 
$$v'(b) = 2^b\ln 2-\sqrt{3}^{b+1}\ln \sqrt{3}= 0 \Leftrightarrow
	\frac{\sqrt{3}\ln \sqrt{3}}{\ln 2} = \left(\frac{2}{\sqrt{3}}\right)^b.$$
Now, the assertions follow from $v(0) >0$. 
\end{proof}
\subsection{Global minimum of $f$ on $M$}
\label{subsec:globalesMinimum}
Comparing the minima on the boundary components $\{ x=y\}$ and $\{ y=0\}$ yields 
Theorem \ref{satz:OptimumParallellen} and Theorem \ref{satz:OptimumMittelsenkrechten}.
\begin{theorem}
\label{satz:OptimumParallellen}
If $1<p<4/3$ or $2<p$, then 
$$R(x_0) = f(x_0,0) =  \frac{\sqrt{3}^p}{2^{p-1}(1+2^b)^{p-1}} \text{ with }
	x_0 = \frac{\sqrt{3}}{2(2^{\frac{1}{p-1}}+1)}$$ 
is the global minimum of $f$ on $M$. It is attained only at $(x_0,0)\in M$.
\end{theorem}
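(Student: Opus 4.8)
The plan is to pin the global minimum of the continuous function $f$ on the compact set $M$ down to its boundary and then to decide between the two boundary pieces that survive. First I would observe that $1<p<4/3$ or $p>2$ forces $b=1/(p-1)\notin\{1,3\}$, so by subsection~\ref{subsec:keine-kritischen-Punkte} the function $f$ has no critical point in $M^\circ$; since $f$ is differentiable and $M$ is compact, the minimum is attained on $\partial M$. Of the four boundary components, $M\cap\{x=0\}$ and $M\cap\{x=\sqrt{3}/4\}$ carry only values strictly above the global minimum (end of Section~\ref{sec:Reduktion} and subsection~\ref{subsec:Rand}), so the minimum is attained on $(M\cap\{y=0\})\cup(M\cap\{x=y\})$.

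Next I would read off the behaviour on those two pieces, already set up in subsection~\ref{subsec:Rand}. On $M\cap\{x=y\}$, Corollary~\ref{folg:1Punkt-optimal} confines the minimum to the single point $(x_1,y_1)=(\sqrt{3}/6,\sqrt{3}/6)$, where $d_1=0$ and $d_2=d_3=1/2$, so $f(x_1,y_1)=2^{1-p}$; on $M\cap\{y=0\}$, $R(x)=2x^p+(\sqrt{3}/2-x)^p$ is strictly convex on $[0,\sqrt{3}/4]$ with the single critical point $x_0=\sqrt{3}/(2(2^b+1))\in(0,\sqrt{3}/4)$ (using $b>0$), so $f$ restricted to that component attains its minimum only at $(x_0,0)$, with value $R(x_0)=\sqrt{3}^p/(2^{p-1}(1+2^b)^{p-1})$. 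Then I would invoke the comparison of these two values from the preceding Lemma: its proof reduces $R(x_0)<f(x_1,y_1)$ to $v(b)>0$ for $v(b):=1+2^b-\sqrt{3}^{b+1}$, and uses $v(1)=v(3)=0$, $v(0)>0$ and the fact that $v$ has at most one critical point to conclude that $v(b)>0$ holds exactly when $b<1$ or $b>3$, i.e.\ exactly when $1<p<4/3$ or $p>2$. Hence $R(x_0)<2^{1-p}$ in our range and $\min_M f=R(x_0)$. This value is attained neither in $M^\circ$ (no critical points), nor on $M\cap\{x=0\}$ or $M\cap\{x=\sqrt{3}/4\}$ (values strictly larger), nor on $M\cap\{x=y\}$ (values $\ge 2^{1-p}>R(x_0)$), while strict convexity of $R$ leaves $(x_0,0)$ as the only point of $M\cap\{y=0\}$ with value $R(x_0)$; thus $(x_0,0)$ is the unique global minimizer.

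The genuinely hard part of the whole analysis, the absence of interior critical points for $b\ne 1,3$, has already been settled in subsection~\ref{subsec:keine-kritischen-Punkte}; within the present proof there is no real obstacle, only bookkeeping. The one place that needs attention is that the uniqueness claim rests on the \emph{strict} inequality $R(x_0)<2^{1-p}$, not merely $\le$, which is precisely why the statement is confined to the \emph{open} intervals $1<p<4/3$ and $p>2$; at $p=4/3$ and $p=2$ equality holds and a one-parameter family of minimizers appears, treated separately.
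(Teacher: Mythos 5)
Your argument is correct and is essentially the paper's own proof spelled out: absence of interior critical points for $b\notin\{1,3\}$, elimination of the boundary components $\{x=0\}$ and $\{x=\sqrt{3}/4\}$, strict convexity of $R$ on $\{y=0\}$ with unique minimizer $x_0$, and the comparison $R(x_0)<f(x_1,y_1)=2^{1-p}$ reduced to $v(b)=1+2^b-\sqrt{3}^{b+1}>0$. One remark: the direction $R(x_0)<f(x_1,y_1)$ for $b<1$ or $b>3$ that you (correctly) use is exactly what the paper's proof of the comparison lemma establishes and what Theorem \ref{satz:OptimumParallellen} requires, even though the first two bullet points of that lemma as printed have the inequality signs interchanged.
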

If $1\leq p<4/3$ or $p>2$, then the global  minimum of $f$ on $M$ is reached by a line parallel to
the triangle side $p_1p_2$ with distance $x_0$ to that side.
This optimal line has normal vector $n=(0,1)$ and is given by the equation $\langle n, q\rangle = x_0$. 
Note that
$$\lim_{p\to 1} x_0=0,\quad \lim_{p\to 4/3} x_0 =\frac{\sqrt{3}}{18},\quad 
	\lim_{p\to 2} x_0 = \frac{\sqrt{3}}{6} = x_1.$$ 
\begin{theorem}
\label{satz:OptimumMittelsenkrechten}
If $4/3<p<2$, then $f(\sqrt{3}/6,\sqrt{3}/6) = 2^{1-p}$ is the global minimum of $f$ on $M$. 
It is attained only at $(\sqrt{3}/6,\sqrt{3}/6)\in M$.
\end{theorem}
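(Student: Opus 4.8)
The plan is to locate the global minimum of $f$ on the compact set $M$ by separating the interior of $M$ from its boundary, exactly as for Theorem \ref{satz:OptimumParallellen}. Since $f$ is continuous and $M$ is compact, a global minimum is attained. For $4/3<p<2$ we have $b=\tfrac1{p-1}\in(1,3)$, so by subsection \ref{subsec:keine-kritischen-Punkte} the function $g$ has no zero in $(0,1)$; together with Corollary \ref{folg:kritischer Punkt, zwei Gleichungen} this shows that $f$ has no critical point in the interior $M^{\circ}$. Hence the global minimum must lie on $\partial M$.

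Next I would go through the four boundary edges of $M$. By subsection \ref{subsec:Rand} (via Corollary \ref{folg:1Punkt-optimal}), on $M\cap\{x=0\}$ and on $M\cap\{x=\sqrt3/4\}$ the values of $f$ are strictly above the global minimum, so these two edges contribute nothing. On $M\cap\{x=y\}$ the minimum equals $f(x_1,y_1)=2^{1-p}$ with $x_1=y_1=\sqrt3/6$ and is attained only there; here $d_1=0$ and $d_2=d_3=\tfrac12$, so the value is $2\cdot 2^{-p}=2^{1-p}$. On $M\cap\{y=0\}$ the function $R(x)=f(x,0)$ is strictly convex with its unique minimizer $x_0=\sqrt3/\bigl(2(2^{b}+1)\bigr)\in(0,\sqrt3/4)$, so the minimum there equals $R(x_0)$, attained only at $(x_0,0)$. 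Consequently the global minimum of $f$ on $M$ equals $\min\{\,2^{1-p},\,R(x_0)\,\}$.

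The decisive step is the comparison of $2^{1-p}$ with $R(x_0)$, supplied by the comparison lemma in subsection \ref{subsec:Rand}. Its proof shows that $R(x_0)<2^{1-p}$ is equivalent to $\sqrt3^{\,b+1}<1+2^{b}$, that is, to $v(b)>0$ with $v(b)=1+2^{b}-\sqrt3^{\,b+1}$; since $v$ has at most one critical point, vanishes at $b=1$ and $b=3$, and satisfies $v(0)>0$, it is negative exactly on $1<b<3$. Thus $R(x_0)>2^{1-p}$ precisely for $4/3<p<2$, so the global minimum equals $2^{1-p}$. For uniqueness I would note that $R\ge R(x_0)>2^{1-p}$ strictly on $M\cap\{y=0\}$, that the interior and the edges $\{x=0\}$ and $\{x=\sqrt3/4\}$ have already been excluded, and that on $M\cap\{x=y\}$ the value $2^{1-p}$ is attained only at $(\sqrt3/6,\sqrt3/6)$; hence that point is the unique global minimizer, which corresponds to a perpendicular bisector of $D$.

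I do not expect a genuine obstacle here: the hard analysis --- the absence of interior critical points --- has already been carried out in subsection \ref{subsec:keine-kritischen-Punkte}, and the statement is merely the case complementary to Theorem \ref{satz:OptimumParallellen}, obtained from the same edge comparison. The only points that call for a little care are checking that the comparison inequality indeed points the asserted way and that the bookkeeping over the four edges behind the uniqueness claim is complete.
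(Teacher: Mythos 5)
Your argument is correct and follows the paper's own route exactly: absence of interior critical points for $b=\tfrac1{p-1}\in(1,3)\setminus\{1,3\}$ from subsection \ref{subsec:keine-kritischen-Punkte}, exclusion of the edges $\{x=0\}$, $\{x=\sqrt{3}/4\}$, and then the comparison of the edge minima $f(\sqrt{3}/6,\sqrt{3}/6)=2^{1-p}$ and $R(x_0)$ via the sign of $v(b)=1+2^b-\sqrt{3}^{\,b+1}$. One remark: your direction of the comparison ($R(x_0)>2^{1-p}$ exactly for $4/3<p<2$) is the correct one and is what the proof of the lemma in subsection \ref{subsec:Rand} actually establishes, whereas the first two bullet points of that lemma as printed have the inequalities interchanged (a typo, since as stated they would contradict both theorems); you have silently corrected this.
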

If  $4/3<p<2$, then the optimal line in $M$ is independent of $p$.
This optimal line contains $p_1$ and is perpendicular to the line through $p_2$ and $p_3$.
\begin{theorem}
If $p=2$ or $p=4/3$, then $2^{1-p}$ is the global minimum of $f$ on $M$.
The minimum is attained at an one-dimensional submanifold of $M$:
\begin{itemize}
\item Let $p=2$ and $(x,y)\in M$.  It holds $f(x,y)=2^{-1}$ if and only if $x=\sqrt{3}/6$.
\item Let $p=4/3$ and $(x,y)\in M$. It holds $f(x,y)=2^{-1/3}$ if and only if 
	$$x= \frac{1+36y^2}{6\sqrt{3}(1+4y^2)} \text{ and }Ê0\leq y \leq \frac{\sqrt{3}}{6}.$$ 
\end{itemize}
\end{theorem}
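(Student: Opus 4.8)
The plan is to treat $p=2$ and $p=4/3$ --- equivalently $b=1$ and $b=3$ --- together, using that these are precisely the values at which $g\equiv 0$. In that case equation~(\ref{eq:kritischer Punkt, Bedingung in t}) is vacuous, so by Corollary~\ref{folg:kritischer Punkt, zwei Gleichungen} the set of critical points of $f$ in $M^\circ$ is no longer discrete but the whole curve $\gamma(y):=(x(y),y)$ cut out by~(\ref{eq:kritischer Punkt, x=x(y)}); the idea is to show $\gamma$ lies in $M$, that $f$ is constant along $\gamma$, and that $f$ is strictly larger everywhere else in $M$. For $p=2$ no critical-point theory is even needed: a direct expansion gives $f(x,y)=\tfrac12+3\bigl(x-\tfrac{\sqrt3}{6}\bigr)^2/(1+4y^2)$ on all of $M$, whence $f\ge\tfrac12=2^{1-2}$ with equality exactly on $M\cap\{x=\sqrt3/6\}$ --- which is indeed $\gamma$ for $b=1$, since~(\ref{eq:kritischer Punkt, x=x(y)}) with $b=1$ collapses to $x\equiv\sqrt3/6$.

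For $p=4/3$, i.e.\ $b=3$, I would first specialize~(\ref{eq:kritischer Punkt, x=x(y)}): with $u:=2\sqrt3\,y$ one has $(1+u)^3+(1-u)^3=2+6u^2$ and $(1+u)^3-(1-u)^3=6u+2u^3$, which turns~(\ref{eq:kritischer Punkt, x=x(y)}) into $x(y)=\dfrac{1+36y^2}{6\sqrt3\,(1+4y^2)}$, the formula in the statement. Next I would check $\gamma(y)\in M$ for $0\le y\le\sqrt3/6$: with the same substitution, $y<x(y)$ is equivalent to $(u-1)^3<0$, i.e.\ $y<\sqrt3/6$, while $x(y)<\sqrt3/4$ reduces to $72y^2<14$, and the endpoints are $\gamma(0)=(\sqrt3/18,0)=(x_0,0)$ and $\gamma(\sqrt3/6)=(\sqrt3/6,\sqrt3/6)=(x_1,y_1)$. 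Then $f$ is constant on $\gamma$: the curve is smooth and $df=0$ along it in $M^\circ$, so $\frac{d}{dy}f(\gamma(y))=df_{\gamma(y)}(\gamma'(y))=0$ for $0<y<\sqrt3/6$, and by continuity $f\equiv f(x_1,y_1)=2^{1-4/3}=2^{-1/3}$ on $\gamma([0,\sqrt3/6])$. This gives the ``if'' direction, and it fixes the minimal value once we know $2^{-1/3}$ is the minimum.

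For the ``only if'' direction it remains to show $2^{-1/3}$ is the global minimum of $f$ on $M$ and is not attained outside $\gamma$. Since $f$ is continuous on the compact set $M$ it attains its minimum; a minimizing point in $M^\circ$ is a critical point, hence lies on $\gamma$; on $M\cap\{x=0\}$ and $M\cap\{x=\sqrt3/4\}$ no minimizer can lie (the discussion following~(\ref{eq:f(x,y) und M})); on $M\cap\{y=0\}$ the strictly convex function $R$ has its unique minimum at $x_0=\sqrt3/18=x(0)$ with $R(x_0)=2^{-1/3}$; and on $M\cap\{x=y\}$ the minimum is attained only at $(x_1,y_1)=\gamma(\sqrt3/6)$, again with value $2^{-1/3}$ (subsection~\ref{subsec:Rand}). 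Hence $\min_M f=2^{-1/3}$ and the set where it is attained is contained in $\gamma([0,\sqrt3/6])$; combined with the constancy of $f$ on $\gamma$ this set equals $\gamma([0,\sqrt3/6])$, the asserted one-dimensional submanifold. The only step requiring real care is the constancy argument together with its boundary limits --- differentiability of $f$ and smoothness of $\gamma$ inside $M^\circ$, and continuity up to $y=0$ and $y=\sqrt3/6$ so that the value of the constant is pinned down by the two boundary optima $x_0$ and $(x_1,y_1)$, both already known to equal $2^{1-p}$; the conceptual heart, that $b\in\{1,3\}$ is exactly when $g\equiv 0$, is what was established in subsection~\ref{subsec:keine-kritischen-Punkte}.
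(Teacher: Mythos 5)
Your proposal is correct and, for $p=4/3$, follows essentially the paper's own route: equation (\ref{eq:kritischer Punkt, Bedingung in t}) degenerates because $g\equiv 0$ for $b\in\{1,3\}$, so the interior critical set is the whole curve $x=x(y)$ from (\ref{eq:kritischer Punkt, x=x(y)}), along which $f$ is constant, and the constant is pinned down by the boundary optima $(x_0,0)$ and $(x_1,y_1)$, which are the endpoints of that curve. You supply some details the paper leaves implicit (that the curve lies in $M^\circ$ for $0<y<\sqrt3/6$, and the chain-rule justification of constancy), which is welcome. The one genuinely different ingredient is your treatment of $p=2$: the identity $f(x,y)=\tfrac12+3\bigl(x-\tfrac{\sqrt3}{6}\bigr)^2/(1+4y^2)$ on $M$ (which checks out) gives both implications and the value of the minimum in one stroke, with no appeal to the critical-point machinery or to the boundary analysis; this is cleaner and more self-contained than the paper's argument for that case, though it does not generalize to $p=4/3$, where the paper's (and your) critical-curve argument is still needed.
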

\begin{proof}
If $b=1$ or $b=3$, then equation (\ref{eq:kritischer Punkt, Bedingung in t})  becomes trivial.
Hence, $(x,y)\in M$ is a critical point of $f$ if and only if equation (\ref{eq:kritischer Punkt, x=x(y)})
and $0<y<\sqrt{3}/6$ are satisfied (see Corollary \ref{folg:kritischer Punkt, Intervall fuer y} and 
Corollary \ref{folg:kritischer Punkt, zwei Gleichungen}).
\begin{itemize}
\item If $b=1$, i.e., $p=2$, then equation (\ref{eq:kritischer Punkt, x=x(y)}) becomes
	$$ x= 
	y \frac{\left(1+2\sqrt{3}y\right)+\left(1-2\sqrt{3}y\right)}{\left(1+2\sqrt{3}y\right)-\left(1-2\sqrt{3}y\right)} 
	= y \frac{2}{4\sqrt{3}y} = \frac{1}{2\sqrt{3}} = \frac{\sqrt{3}}{6} = x_1.$$
	Note that $(0,x_1)$ is the center of mass of the triangle $D$ (see Figure \ref{Abb:OptiRedukt}).
	The function defined by $y\mapsto f(x_1,y)$ is constant. 
	The values $f(x_1,\sqrt{3}/6)$ and $f(x_1,0)$ are the global minima on the boundary components
	$\{ x=y\}\cap M$ respectively $\{ y=0\}\cap M$, since
	$\lim_{p\to 2} x_0 = \sqrt{3}/6$ (see subsection \ref{subsec:Rand}).
\item If $b=3$, i.e.,  $p=4/3$, then equation (\ref{eq:kritischer Punkt, x=x(y)}) becomes
	\begin{align*} 
	x= &
	y \frac{\left(1+2\sqrt{3}y\right)^3+\left(1-2\sqrt{3}y\right)^3}{\left(1+2\sqrt{3}y\right)^3
	-\left(1-2\sqrt{3}y\right)^3} = y \frac{2+72y^2}{12\sqrt{3}y+48\sqrt{3}y^3} 
	= \frac{1+36y^2}{6\sqrt{3}(1+4y^2)}\\
	x(y) & = \frac{\sqrt{3}}{18}\left( 9-\frac{8}{1+4y^2}\right)
	\end{align*}
	The function $x(y)$ is strictly increasing for  $0\leq y$.
	The function defined by $y\mapsto f(x(y),y)$ is constant.
	Since $x(\sqrt{3}/6) = \sqrt{3}/6$ and $x(0)=\sqrt{3}/18=\lim_{p\to 4/3} x_0$, the values
	$f(x(\sqrt{3}/6),\sqrt{3}/6)$ and $f(x(0),0)$ coincide with the global minima 
	on the boundary components $\{ x=y\}\cap M$ respectively $\{ y=0\}\cap M$ 
	(see subsection \ref{subsec:Rand}).
\end{itemize}
\end{proof}
\section{Summary}
Applying the symmetry group of the triangle $D$ to the minima of $f:M\to \R$ found in subsection
\ref{subsec:globalesMinimum}  we obtain all optimal lines:
\subsection{$1\leq p<4/3$ and $2<p\leq \infty$}
If $1\leq p<4/3$ or $2<p\leq \infty$, then there exist exactly three $L^p$-optimal lines. 
These are the lines intersecting the triangle $D$, parallel to one of the sides of the triangle $D$ 
with distance $x_0(p)=\sqrt{3}(2^{\frac{p}{p-1}}+1)^{-1}$ to that side
(Figure \ref{Abb:lOptBg3} and Figure \ref{Abb:OptBk1}).

Any of this three lines is invariant under the reflection in the perpendicular bisector of the triangle side 
parallel to that line. The set of optimal lines is generated by the rotations 
around the center of mass of $D$ through $2\pi/3$ and the optimal line found in subsection \ref{subsec:globalesMinimum}
for $1<p<4/3$ and f"ur $2<p\leq\infty$.
\begin{figure}
\noindent
\begin{minipage}[b]{.49\linewidth}
  \mbox{\scalebox{.28}{\includegraphics{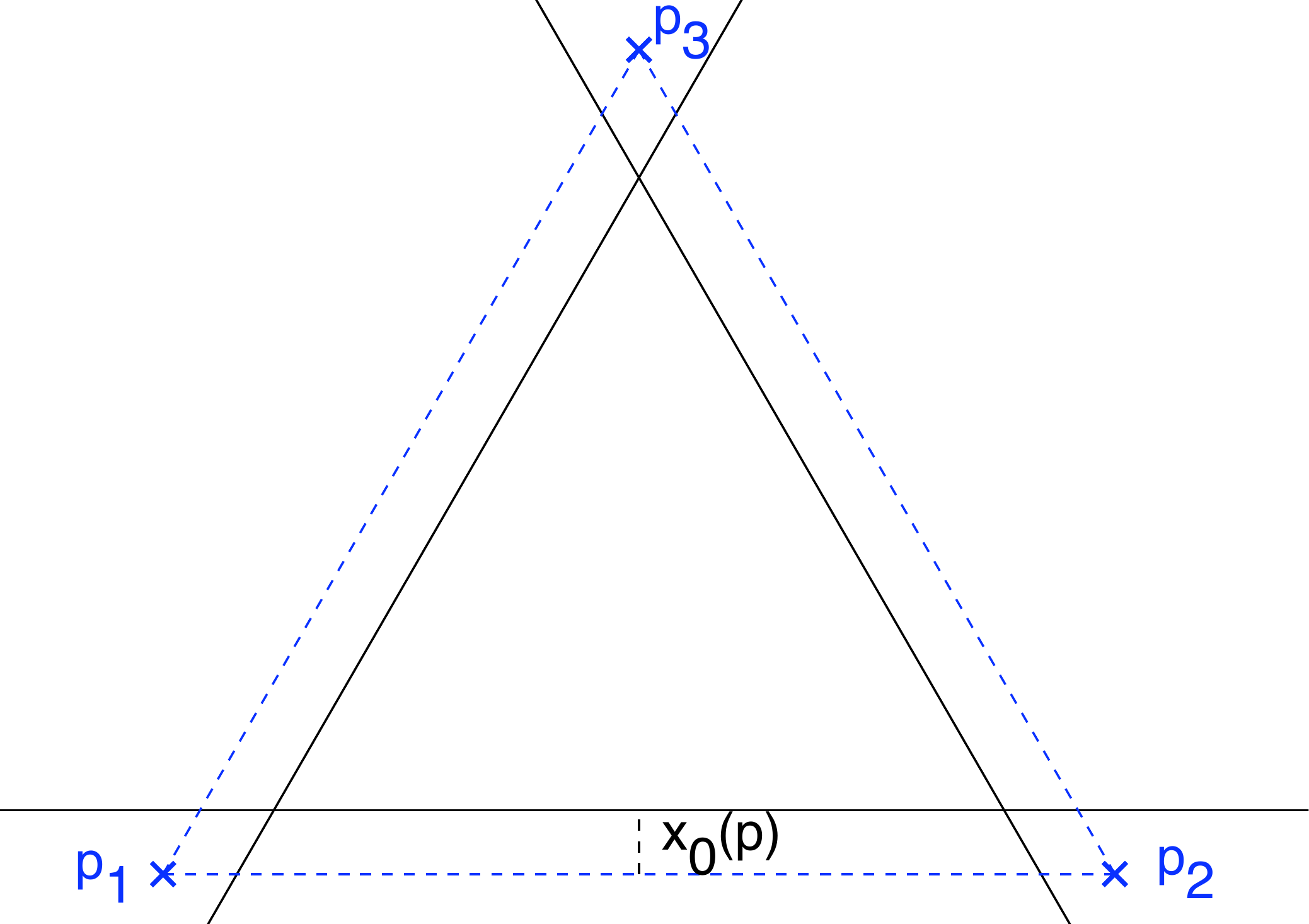}}}
\caption{$1\leq p<4/3$}
\label{Abb:lOptBg3}
\end{minipage}
\begin{minipage}[b]{.49\linewidth}
  \mbox{\scalebox{.28}{\includegraphics{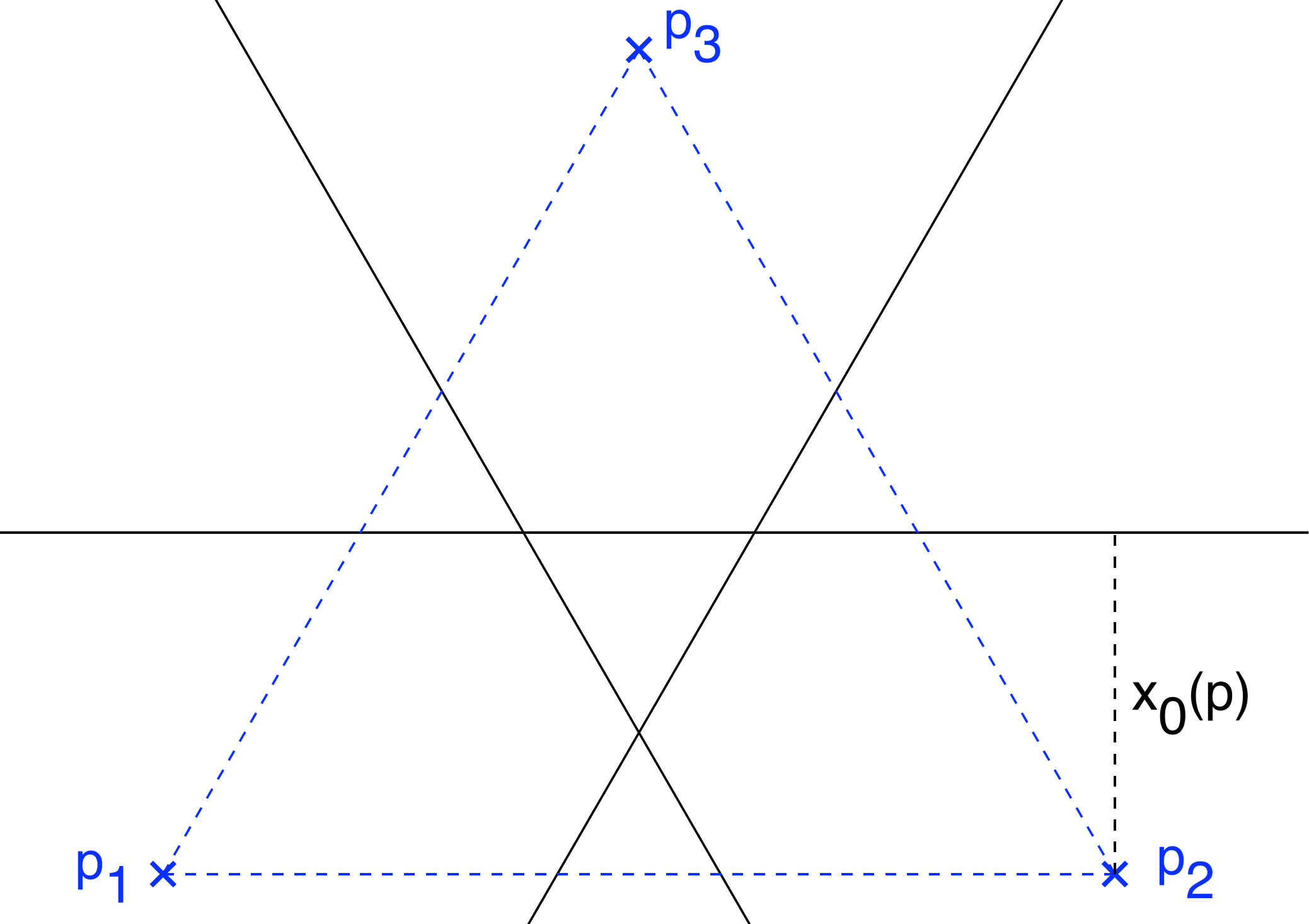}}}
\caption{$2<p\leq\infty$}
\label{Abb:OptBk1}
\end{minipage}
\end{figure}
\subsection{$4/3<p<2$}
If $4/3<p<2$, then there are exactly three $L^p$-optimal lines.
These are the lines containing one vertex of the triangle and parallel to the triangle side 
opposite to that vertex (see Figure \ref{Abb:OptBk3g1}). 

These optimal lines are invariant under the reflections in the symmetry group of $D$.
Again, the set of optimal lines is generated by the rotations in the symmetry group of $D$ and
the optimal line found in subsection \ref{subsec:globalesMinimum}  for $4/3<p<2$.
\subsection{$p=2$}
A line $g$ is $L^2$-optimal if and only if $g$ contains the center of mass of the triangle $D$
(see Figure \ref{Abb:OptB1}).
The set of all $L^2$-optimal lines arises as the orbit of the optimal lines 
found in subsection \ref{subsec:globalesMinimum}  for $p=2$ by the symmetry group of $D$. 
If $y\ne 0,\sqrt{3}/6$, then the orbit of $(\sqrt{3}/6,y)$ consists of six lines.
If $y=0$ or $y=\sqrt{3}/6$, then the orbit consists of three lines.

The green lines in Figure \ref{Abb:OptB1}, that are obtained with $y=\sqrt{3}/6$,
coincide with the optimal lines for  $4/3<p<2$. 
The red lines in Figure \ref{Abb:OptB1} are obtained with $y=0$ and as limits $p\to 2$ of
optimal lines for  $p>2$ in Figure \ref{Abb:OptBk1}.
\subsection{$p=4/3$}
The set of $L^{4/3}$-optimal lines is most conveniently described as the orbit of 
the set of optimal lines found in subsection \ref{subsec:globalesMinimum} for $p=4/3$
by the symmetry group of the triangle $D$.
If $y\ne 0,\sqrt{3}/6$, then the orbit of $(x(y),y)$ consists of six lines 
(see Figures \ref{Abb:OptB3Y2Zent}, \ref{Abb:OptB3Y5Zent} and \ref{Abb:OptB3Y7Zent}). 
If $y=0$ (Figure \ref{Abb:OptB3Y0}) or $y=\sqrt{3}/6$ (Figure \ref{Abb:OptBk3g1}),
then the orbit consists of three lines.
The $L^{4/3}$-optimal lines in Figure \ref{Abb:OptB3Y0} are also the limits $p\to 4/3$ 
of $L^p$-optimal lines with $p<4/3$ (see Figure \ref{Abb:lOptBg3}).
Of course, the $L^{4/3}$-optimal lines in Figure \ref{Abb:OptBk3g1} are the limits
$p\to 4/3$ of $L^p$-optimal lines for $p>4/3$.
\begin{figure}
\noindent
\begin{minipage}[b]{.49\linewidth}
  \mbox{\scalebox{.28}{\includegraphics{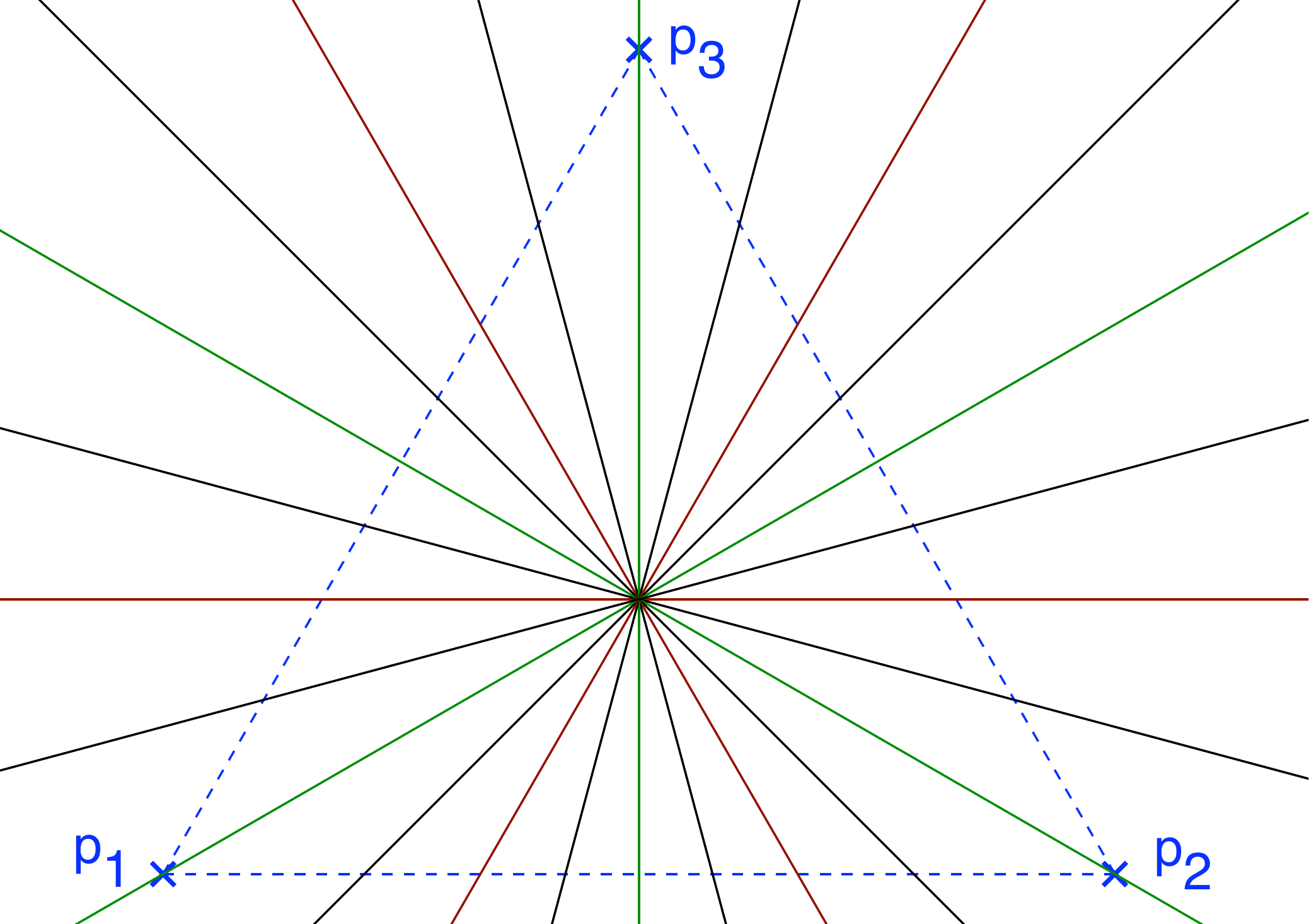}}}
\caption{$p=2$}
\label{Abb:OptB1}
\end{minipage}
\begin{minipage}[b]{.49\linewidth}
  \mbox{\scalebox{.28}{\includegraphics{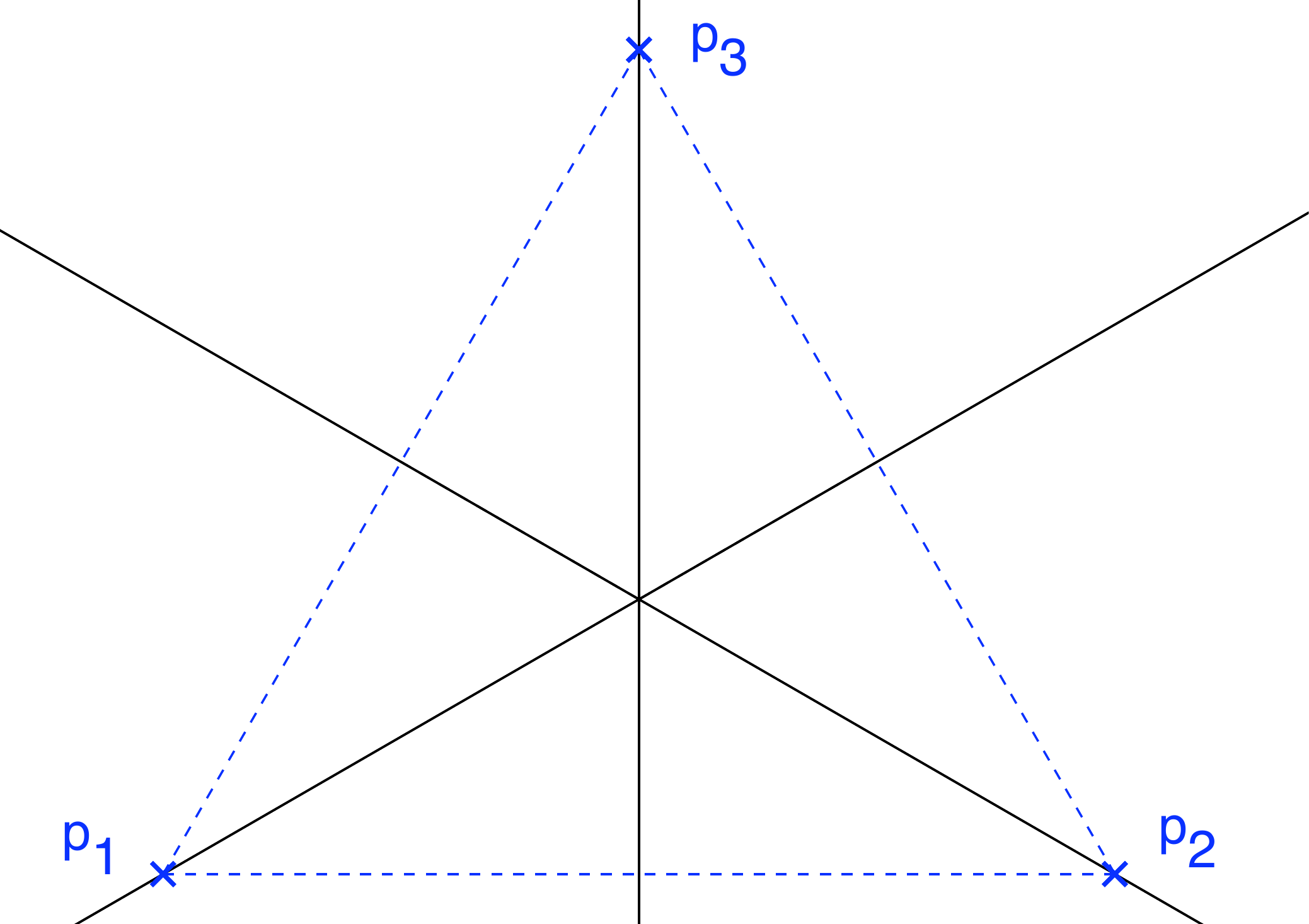}}}
\caption{$4/3<p<2$}
\label{Abb:OptBk3g1}
\end{minipage}
\end{figure}
\begin{figure}
\noindent
\begin{minipage}[b]{.49\linewidth}
  \mbox{\scalebox{.28}{\includegraphics{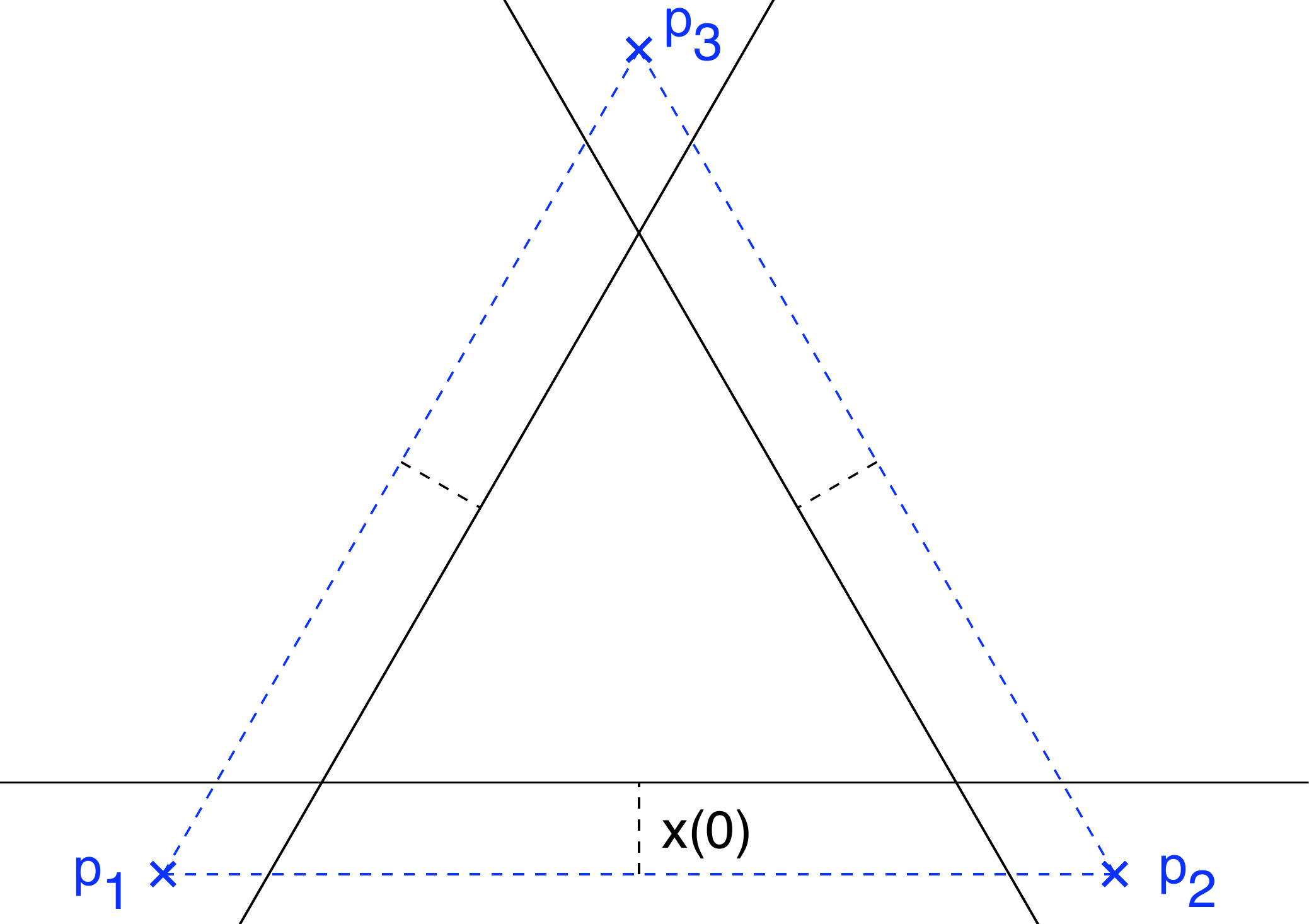}}}
\caption{$p=4/3$, $y=0$}
\label{Abb:OptB3Y0}
\end{minipage}
\begin{minipage}[b]{.49\linewidth}
  \mbox{\scalebox{.28}{\includegraphics{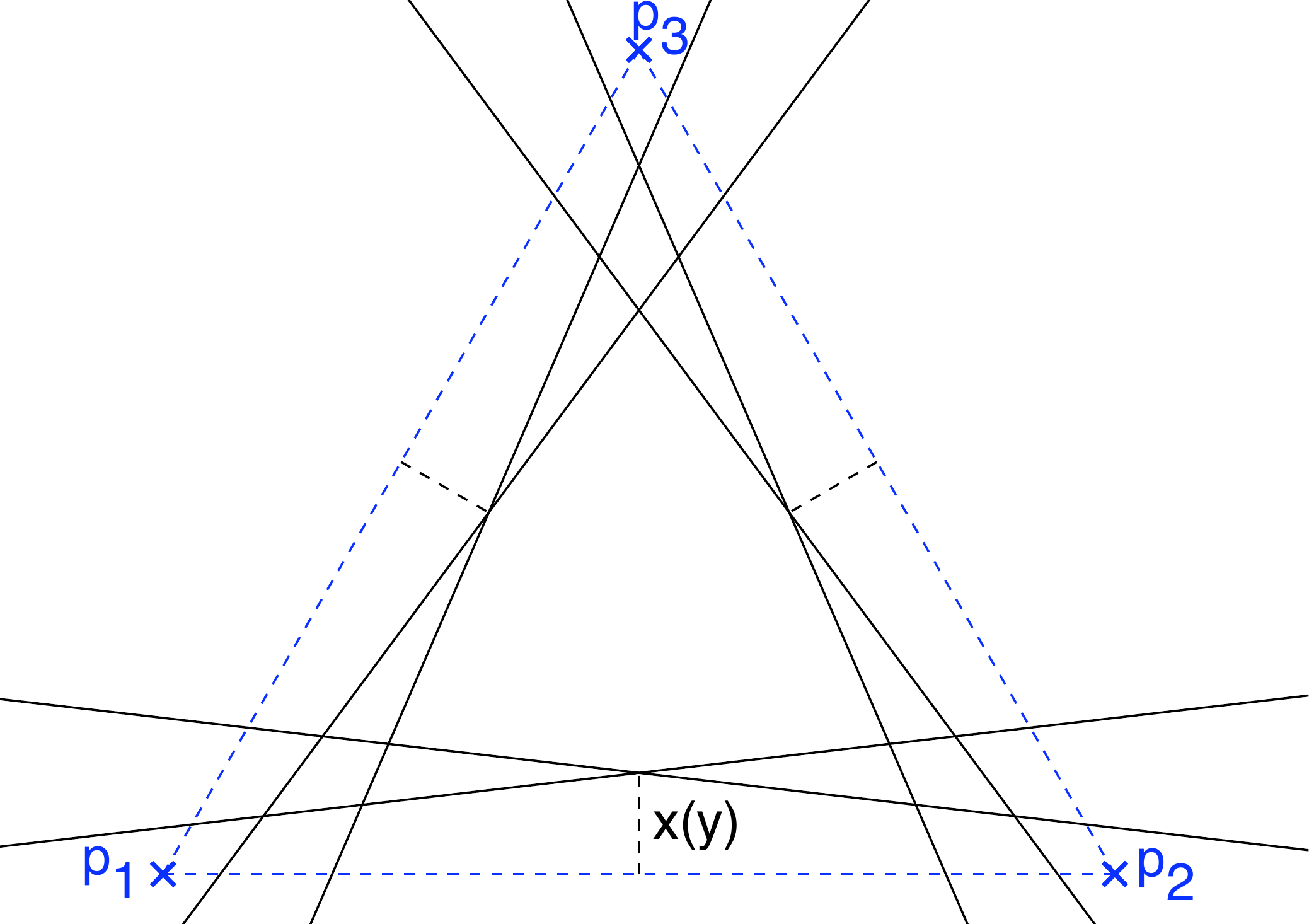}}}
\caption{$p=4/3$, $y=2\sqrt{3}/60$}
\label{Abb:OptB3Y2Zent}
\end{minipage}
\end{figure}
\begin{figure}
\noindent
\begin{minipage}[b]{.49\linewidth}
  \mbox{\scalebox{.28}{\includegraphics{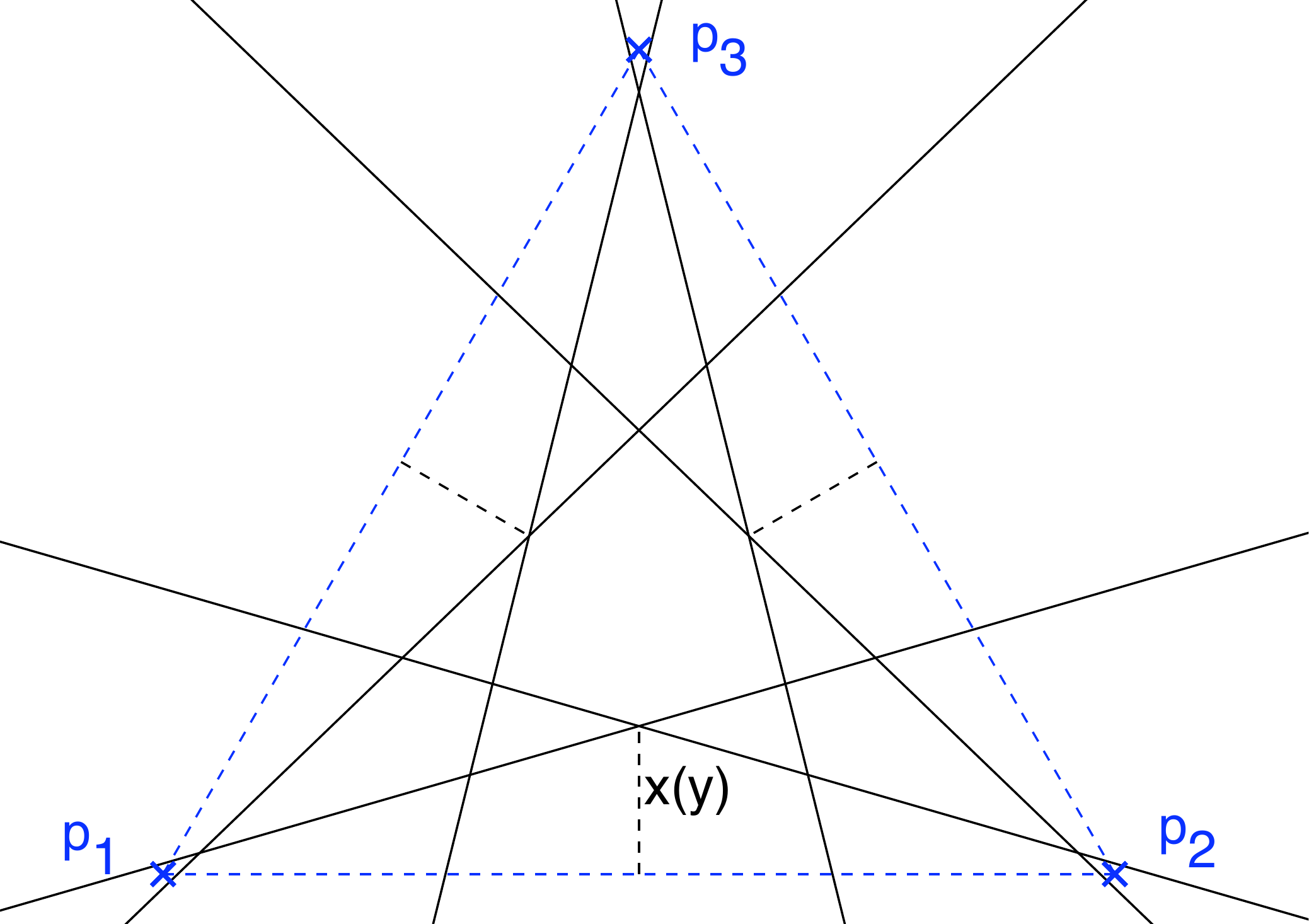}}}
\caption{$p=4/3$, $y=5\sqrt{3}/60$}
\label{Abb:OptB3Y5Zent}
\end{minipage}
\begin{minipage}[b]{.49\linewidth}
  \mbox{\scalebox{.28}{\includegraphics{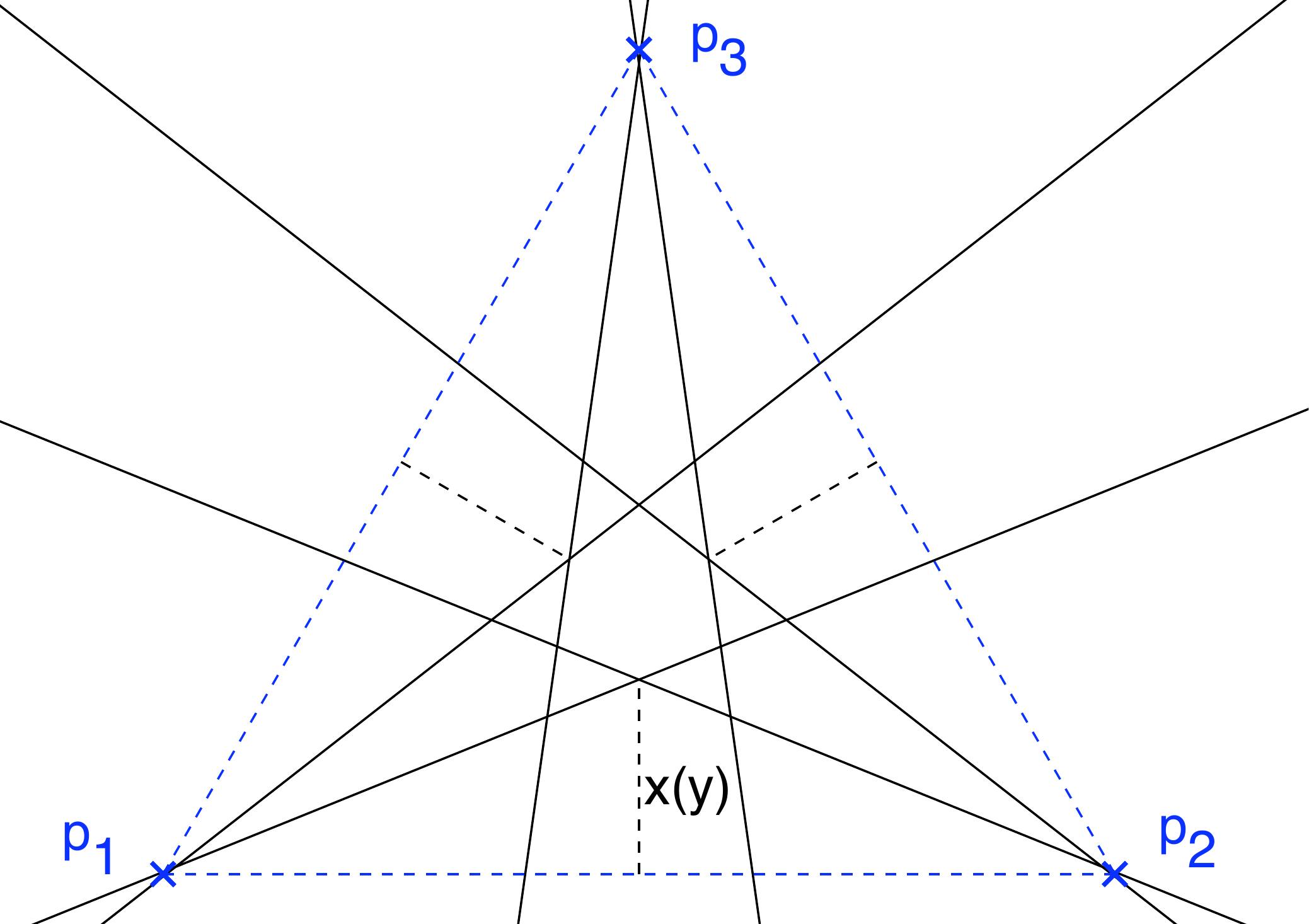}}}
\caption{$p=4/3$, $y=7\sqrt{3}/60$}
\label{Abb:OptB3Y7Zent}
\end{minipage}
\end{figure}

\end{document}